\newcommand{\ibold}{\mathbf{i}}
\newcommand{\jbold}{\mathbf{j}}
\newcommand{\ebold}{\bm{\mathrm{e}}}
\newcommand{\Dim}{{\scriptsize \textsf{D}}}
\newcommand{\bbR}{\mathbb{R}}
\newcommand{\bbY}{\mathbb{Y}}
\newcommand{\bbZ}{\mathbb{Z}}
\newcommand{\bmi}{\mathbf{i}}
\newcommand{\bmihalf}{\bmi + \frac 1 2 \bm{\mathrm{e}}^d}
\newcommand{\bmj}{\mathbf{j}}
\newcommand{\bmn}{\mathbf{n}}
\newcommand{\bmp}{\mathbf{p}}
\newcommand{\bmx}{\mathbf{x}}
\newcommand{\bmalpha}{\bm{\alpha}}
\newcommand{\calC}{\mathcal{C}}
\newcommand{\calF}{\mathcal{F}}
\newcommand{\calI}{\mathcal{I}}
\newcommand{\calL}{\mathcal{L}}
\newcommand{\calN}{\mathcal{N}}
\newcommand{\calS}{\mathcal{S}}
\newcommand{\calT}{\mathcal{T}}
\newcommand{\calX}{\mathcal{X}}
\newcommand{\calY}{\mathcal{Y}}
\newcommand{\cuppp}{\cup^{\perp \perp}}
\DeclareMathOperator{\diag}{diag}
\DeclareMathOperator{\bigO}{O}
\DeclareMathOperator{\intr}{int}
\newcommand{\basis}{\bm{\mathrm{e}}}
\newcommand{\rmd}{\mathrm{d}}
\newcommand{\rmC}{\mathrm{C}}
\newcommand{\rmF}{\mathrm{F}}
\newcommand{\bbone}{\mathbbm{1}}
\newcommand{\brk}[1]{\left\langle #1 \right\rangle}
\renewcommand{\algorithmicrequire}{\textbf{Input:}}
\renewcommand{\algorithmicensure}{\textbf{Output:}}
\newcommand{\fdsy@scale}{1.0}
\newcommand\fdsy@mweight@normal{Book}%
\newcommand\fdsy@mweight@small{Regular}%
\newcommand\fdsy@bweight@normal{Medium}%
\newcommand\fdsy@bweight@small{Bold}%
\DeclareFontFamily{U}{FdSymbolF}{}
\DeclareFontShape{U}{FdSymbolF}{m}{n}{
    <-7.1> s * [\fdsy@scale] FdSymbolF-\fdsy@mweight@small
    <7.1-> s * [\fdsy@scale] FdSymbolF-\fdsy@mweight@normal
}{}
\DeclareFontShape{U}{FdSymbolF}{b}{n}{
    <-7.1> s * [\fdsy@scale] FdSymbolF-\fdsy@bweight@small
    <7.1-> s * [\fdsy@scale] FdSymbolF-\fdsy@bweight@normal
}{}
\DeclareSymbolFont{delimiters}{U}{FdSymbolF}{m}{n}
\DeclareMathDelimiter{\lAngle}{\mathopen}{delimiters}{"92}{delimiters}{"92}
\DeclareMathDelimiter{\rAngle}{\mathclose}{delimiters}{"98}{delimiters}{"92}
\newcommand{\bbrk}[1]{\left\lAngle #1 \right\rAngle}
\title{A Fast Fourth-Order Cut Cell Method for Solving Elliptic Equations
in Two-Dimensional Irregular Domains}
\author{Yuke Zhu\thanks{School of Mathematical Sciences,
    Zhejiang University,
    38 Zheda Road, Hangzhou, Zhejiang Province, 310027 China.}
\and Zhixuan Li\footnotemark[1]
\and Qinghai Zhang\thanks{(Corresponding author)
    School of Mathematical Sciences,
    Zhejiang University,
    38 Zheda Road, Hangzhou, Zhejiang Province, 310027 China
    (\email{qinghai@zju.edu.cn}).}}
\begin{document}
\maketitle

\begin{abstract}
We propose a fast fourth-order cut cell method
for solving constant-coefficient elliptic equations
in two-dimensional irregular domains. 
In our methodology, the key to dealing with irregular domains
is the poised lattice generation (PLG) algorithm
that generates finite-volume interpolation stencils near the irregular boundary. 
We are able to derive high-order discretization of the elliptic operators
by least squares fitting over the generated stencils. 
We then design a new geometric multigrid scheme to efficiently solve the resulting linear system.
Finally, we demonstrate the accuracy and efficiency of our method
through various numerical tests in irregular domains.

\end{abstract}

\begin{keywords}
elliptic equation, fourth order, cut cell method, 
poised lattice generation, geometric multigrid.
\end{keywords}
\begin{AMS}
{65D05, 65N08}
\end{AMS}

\section{Introduction}
\label{sec:intro}

Consider the constant-coefficient elliptic equation
\begin{equation}
    a \frac{\partial^2 u}{\partial x_1^2}
    + b \frac{\partial^2 u}{\partial x_1 \partial x_2}
    + c \frac{\partial^2 u}{\partial x_2^2}
    = f \quad \text{in } \Omega,
\label{eq:ccEllipticEq}
\end{equation}
where $u : \Omega \rightarrow \bbR$ is the unknown function,
$a, b, c$ are real numbers satisfying $b^2 - 4ac < 0$,
and $\Omega$ is a open domain in $\bbR^2$.
A particular case of interest 
is Poisson's equation with $(a, b, c) = (1, 0, 1)$.
Poisson's equation plays an important role in many other mathematical formulations
such as the Helmholtz decomposition 
and the projection methods~\cite{Brown2001, Johnston2004, Liu07, Zhang2014, Zhang2016:GePUP}
for solving the incompressible Navier-Stokes equations (INSE).

A myriad of efficient and accurate numerical methods have been developed
for solving partial differential equations (PDEs) in rectangular domains. 
In many real-world applications, however, 
the problem domains are irregular with piecewise smooth boundaries.
Different numerical methods have been proposed for irregular domains,
including the notable finite element methods (FEM) on triangular meshes. 
Another approach is the cut cell method, 
in which the irregular domain is embedded into a background Cartesian mesh.
One merit of the cut cell method is the relatively inexpensive grid generation and storage.
Conventional unstructured or body-fitted grids require large computational memory 
 for storing cells, faces and a connectivity table to link them. 
The Cartesian grid, on the other hand, does not need the storage of any connectivity table
 as they can be mapped by simple indices.
As a result, the computational time is also reduced.
Another merit is that mature techniques
can be borrowed from the finite difference and finite volume community.
Examples are multigrid algorithms~\cite{Briggs:A_Multigrid_Tutorial} for elliptic equations,
shock-capturing schemes~\cite{Shu1988, Liu1994} for hyperbolic systems
and high-order conservative schemes~\cite{Morinishi1998} for incompressible flows.

It is worth mentioning a class of finite difference Cartesian grid methods
for the related ``interface problems''.
The Immersed Interface Method (IIM)~\cite{LeVeque1994, Li1998, Li2003, Linnick2005}
modifies the discretization stencil
to incorporate jump conditions on the interface.
The Ghost Fluid Method (GFM)~\cite{Liu2000, Gibou2002, Liu2003, LiuMGFM2023}
introduces ghost values as smooth extensions of the physical variables across the interface,
so that conventional finite difference formulas can be employed.



For problems involving a single-phase fluid, the cut cell method 
(also known as Cartesian grid method and embedded boundary (EB) method) is preferred. 
Previously, second-order cut cell methods 
have been developed for Poisson's equation~\cite{Johansen1998, Schwartz2006},
heat equation~\cite{McCorquodale2001, Schwartz2006}
and the Navier-Stokes equations~\cite{Kirkpatrick2003, Trebotich15}.
Recently, a fourth-order EB method for Poisson's equation~\cite{Devendran2017}
was developed by Devendran et al. 
However, a number of issues presented in the cut cell method
are only partially resolved, or even unresolved:
\begin{enumerate}[label={(Q-\arabic*)}, leftmargin=*]
\item
Given that the problem domains can be arbitrarily complex in geometry and topology,
is there an accurate and efficient representation of such domains?
\label{q:representationOfDomains}

\item
For domains with arbitrarily complex boundaries,
the generation of cut cells and the stability issues 
arising from the small cells within them are intricate.
Can we design a cutting algorithm to produce control volumes suitable for discretization?
\label{q:mergingAlgorithm}

\item
Previously, the selection of stencils 
depended very much on both the order of discretization
and the specific form of the differential operator.
In these methods,
obtaining a high-order discretization of the cross derivative term
in complex geometries can be tricky.
So can we generate stencils that adapt automatically to complex geometries,
and meanwhile minimize their coupling with the high-order discretization
of the differential operator?
\label{q:stencilGeneration}

\item
Having discretized the elliptic equation,
can we solve the resulting linear systems efficiently
to produce fourth-order accurate solutions?
\label{q:efficiency}
\end{enumerate}


In this paper, we give positive answers to all the above questions
by proposing a fast fourth-order cut cell method
for solving constant-coefficient elliptic equations
in two-dimensional irregular domains.
%

To answer~\ref{q:representationOfDomains},
we make use of the theory of Yin space~\cite{Zhang2020:YinSets}.
The member of Yin space, called Yin set, is the mathematical model
for physically meaningful regions in the plane.
Every Yin set has a simple representation that facilitates geometric and topological queries.
In this work, we will formulate the computational domains
and the cut cells in terms of Yin sets.
Our answer to~\ref{q:mergingAlgorithm}
relies on the Boolean operations equipped by the Yin space;
see Theorem~\ref{thm:booleanAlgebra}.
We propose a systematic algorithm for 
generating cut cells and merging the small cells 
with volume fraction below a user-specified threshold
in the computational domain.
The key to answering~\ref{q:stencilGeneration}
is the poised lattice generation (PLG) algorithm~\cite{Zhang:PLG}.
Given a prescribed region of $\bbZ^\Dim$,  
the PLG algorithm generates poised lattices on which multivariate polynomial interpolation is unisolvent.
It has been successfully employed to obtain fourth- and sixth-order discretization
of the convection operator in complex geometries.
Based on the interpolation stencils, 
we demonstrate a general approach for
deriving high-order approximations to linear differential operators,
using least squares fitting by complete multivariate polynomials. 
This approach handles different boundary conditions
and can be extended to nonlinear differential operators as well.
To answer~\ref{q:efficiency},
we design a new geometric multigrid algorithm for efficiently solving the resulting discrete linear system.
We modify the conventional multigrid components to
account for irregular domains and high-order discretization.
We show by analysis and by numerical tests that
the linear systems are solved with optimal complexity. 

The rest of this paper is organized as follows. 
In Section~\ref{sec:preliminaries}
we collect preliminary concepts, 
with focus on the PLG algorithm 
in finite difference formulation. 
In Section~\ref{sec:discretization},
we propose a fourth-order cut cell method
for constant-coefficient elliptic equations in irregular domains. 
Specifically,
we discuss the cut-cell generation in Section~\ref{sec:grid},
the standard finite-volume approximation in Section~\ref{sec:FVAppro},
the generation of finite-volume stencils near irregular boundaries,
and the high-order discretization of the differential operators 
in Section~\ref{sec:discretizationOfSpatialOp}.
We then present a multigrid algorithm for efficiently solving the discrete linear system
and analyze its complexity in Section~\ref{sec:multigrid}.
In Section~\ref{sec:Tests},
we perform numerical tests in various irregular domains
to demonstrate the accuracy and efficiency of our method. 
Finally, we conclude this paper with some future research prospects
in Section~\ref{sec:conclusions}.

\section{Preliminaries}
\label{sec:preliminaries}

In this section, we collect preliminary concepts that are necessary for
the development of the proposed numerical method in this paper.

\subsection{Yin space}

To answer the need for an accurate and efficient representation of the irregular domains,
we review a topological space for modeling physically meaningful regions.

Let $\calX$ be a topological space, 
and $\calS, \calT$ be two subsets of $\calX$.
Denote by $\calS^-$ the closure of $\calS$,
and by $\calS^\perp$ the exterior of $\calS$,
i.e.~the interior of the complement of $\calS$. 
A subset $\calS$ is \emph{regular open} if it coincides with the interior of its closure.
Denote by $\cuppp$ the regularized union operation, 
i.e.~$\calS \cuppp \calT := \left(\calS \cup \calT \right)^{\perp \perp}$. 
It can be shown~\cite[\S 10]{Givant:BooleanAlgebra} that
the class of all regular open subsets of $\calX$
is closed under the regularized union. 
For $\calX = \bbR^2$, 
a subset $\calS \subset \bbR^2$ is \emph{semianalytic}
if there exist a finite number of analytic functions
$g_i : \bbR^2 \rightarrow \bbR$
such that $\calS$ is in the universe of a finite Boolean algebra
formed from the sets
\begin{equation}
\calX_i = \left\{ \bmx \in \bbR^2 : g_i(\bmx) \ge 0 \right\}. 
\end{equation}
In particular, a semianalytic set is \emph{semialgebraic}
if all the $g_i$'s are polynomials. 

\begin{definition}[Yin Space~\cite{Zhang2020:YinSets}]
A \emph{Yin set} $\calY \subset \bbR^2$ is a regular open semianalytic set 
whose boundary is bounded. 
The class of all such Yin sets form the Yin space $\bbY$. 
\end{definition}

In the above definition, 
regularity captures the physical meaningfulness of the fluid regions,
openness guarantees a unique boundary representation, 
and semianalyticity implies finite representation.

\begin{definition}
The subspace $\bbY_{\mathrm{c}}$ of $\bbY$
consists of Yin sets whose boundaries are captured by cubic splines.
\end{definition}

\begin{theorem}[Zhang and Li~\cite{Zhang2020:YinSets}]
$\left(\bbY, \cuppp, \cap, ^\perp, \emptyset, \bbR^2 \right)$ is a Boolean algebra. 
\label{thm:booleanAlgebra}
\end{theorem}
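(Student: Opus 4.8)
The plan is to realize $\bbY$ as a Boolean subalgebra of the ambient algebra of all regular open subsets of $\bbR^2$, so that every Boolean-algebra axiom is inherited for free. By the cited structural result~\cite[\S 10]{Givant:BooleanAlgebra}, the collection $\calC$ of all regular open subsets of $\bbR^2$, equipped with the regularized union $\cuppp$ as join, ordinary intersection $\cap$ as meet, the exterior ${}^\perp$ as complement, and with $\emptyset$ and $\bbR^2$ as the two distinguished elements, is itself a Boolean algebra; here the meet is the plain intersection because the intersection of two regular open sets is again regular open, whereas the join must be regularized since a plain union need not be. Because the operations named in the theorem are, by definition, the restrictions to $\bbY$ of the operations of $\calC$, it suffices to check that $\emptyset$ and $\bbR^2$ lie in $\bbY$ and that $\bbY$ is closed under $\cuppp$, $\cap$, and ${}^\perp$: a subset of a Boolean algebra that contains both distinguished elements and is closed under all three operations is again a Boolean algebra.

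That $\emptyset, \bbR^2 \in \bbY$ is immediate, since both are regular open, both are semianalytic (take $g \equiv 1$ for $\bbR^2$, and intersect a basic set $\calX_i$ with its complement for $\emptyset$), and both have empty, hence bounded, boundary. For the closure statements I would fix $\calS, \calT \in \bbY$ and treat each defining property in turn. Regularity is automatic, as $\calC$ is already closed under the three operations. Semianalyticity is preserved because the class of semianalytic sets is closed under the set-theoretic Boolean operations and, crucially, also under topological interior and closure; since $\calS^\perp = \intr(\calS^c)$ and $\calS \cuppp \calT = \intr((\calS \cup \calT)^-)$ are built from Boolean operations followed by interior and closure, they too are semianalytic.

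Boundedness of the boundary is preserved because the boundary can only shrink under these operations: one has $\partial(\calS \cap \calT) \subseteq \partial\calS \cup \partial\calT$ and $\partial(\calS \cuppp \calT) \subseteq \partial\calS \cup \partial\calT$, while $\partial(\calS^\perp) = \partial(\intr(\calS^c)) \subseteq \partial(\calS^c) = \partial\calS$. In each case the resulting boundary sits inside a finite union of the bounded boundaries of $\calS$ and $\calT$, hence is bounded. Collecting these observations, $\calS \cap \calT$, $\calS \cuppp \calT$, and $\calS^\perp$ are each regular open, semianalytic, and furnished with a bounded boundary, so they lie in $\bbY$, and the subalgebra principle then closes the argument.

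I expect the main obstacle to be the semianalyticity bookkeeping under the regularization map ${}^{\perp\perp}$ and the exterior, rather than the algebraic axioms, which descend automatically to any subalgebra. Closure of semianalytic sets under finite unions, intersections, and complements follows directly from the definition as the universe of a finite Boolean algebra generated by the basic sets $\calX_i = \{\bmx \in \bbR^2 : g_i(\bmx) \ge 0\}$; the delicate point is closure under interior and closure, which is not visible from that definition and must be imported from the structural theory of semianalytic sets. Once that fact is secured, the remaining verifications—regularity from the ambient algebra and the bounded-boundary containments above—are routine.
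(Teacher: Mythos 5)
The paper does not prove this statement at all: it is imported verbatim from the cited reference \cite{Zhang2020:YinSets}, so there is no in-paper proof to compare against. Your subalgebra argument --- inheriting the Boolean axioms from the ambient algebra of regular open subsets of $\bbR^2$ and then checking that $\bbY$ contains $\emptyset$ and $\bbR^2$ and is closed under $\cuppp$, $\cap$, and ${}^\perp$ --- is the natural route and, as far as I can tell, the same strategy used in the cited source; your boundary containments and the observation that the meet needs no regularization are correct. The one point you rightly flag as non-trivial, closure of semianalytic sets under topological interior and closure, genuinely does not follow from the paper's finite-Boolean-combination definition and must be imported from the \L{}ojasiewicz theory (where semianalyticity is a local notion); a fully self-contained proof would also need to reconcile that local notion with the paper's global definition, which is where the bounded-boundary hypothesis does real work by localizing the question to a compact set.
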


\begin{corollary}
$\bbY_{\mathrm{c}}$ a sub-algebra of $\bbY$. 
\end{corollary}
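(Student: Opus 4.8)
The plan is to verify the defining closure properties of a Boolean subalgebra. Since $\bbY_{\mathrm{c}} \subseteq \bbY$ and the operations $\cuppp$, $\cap$, $^\perp$ already return Yin sets by Theorem~\ref{thm:booleanAlgebra}, the only thing to check is that, when restricted to operands whose boundaries are cubic splines, these operations again produce sets whose boundaries are cubic splines, together with the membership of the distinguished elements $\emptyset$ and $\bbR^2$ in $\bbY_{\mathrm{c}}$. The latter is immediate, since both have empty boundary. Moreover, because De Morgan's law $\calS \cap \calT = (\calS^\perp \cuppp \calT^\perp)^\perp$ holds in the Boolean algebra $\bbY$, it suffices to treat closure under $^\perp$ and under $\cuppp$; closure under $\cap$ then comes for free.

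For the exterior, I would use that a regular open set and its exterior share the same boundary, $\partial\calS = \partial\calS^\perp$; thus $\partial\calS$ being a cubic spline forces $\partial\calS^\perp$ to be one as well, giving $\calS^\perp \in \bbY_{\mathrm{c}}$. The substantive case is the regularized union. Here I would invoke the constructive realization of the Boolean operations on the boundary representation of Yin sets established in~\cite{Zhang2020:YinSets}: the boundary of $\calS \cuppp \calT$ is reassembled from arcs of $\partial\calS$ and $\partial\calT$, obtained by subdividing each boundary at the finitely many points where the two boundaries meet.

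The key observation is that this cut-and-join procedure preserves the cubic-spline structure. Every arc appearing in $\partial(\calS \cuppp \calT)$ is a sub-arc of one of the original cubic splines; restricting a piecewise-cubic curve to a sub-interval, reparametrizing each affected polynomial piece affinely and promoting the meeting points to new knots, again yields a piecewise-cubic curve, i.e.\ a cubic spline. Hence $\partial(\calS \cuppp \calT)$ is a finite collection of cubic splines and $\calS \cuppp \calT \in \bbY_{\mathrm{c}}$, completing the closure verification.

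I expect the main obstacle to be the bookkeeping for degenerate intersections, where $\partial\calS$ and $\partial\calT$ are tangent or coincide along an entire arc rather than crossing transversally. In these configurations the combinatorial assembly of the new boundary is delicate, and one must confirm that regularization discards any spurious lower-dimensional remnants while leaving behind a genuine cubic-spline boundary. Granting the intersection analysis of~\cite{Zhang2020:YinSets}, the remaining work collapses to the elementary fact that sub-arcs of cubic splines are cubic splines, from which the corollary follows.
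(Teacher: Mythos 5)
Your proposal is correct and follows the same route the paper implicitly relies on: the paper states this corollary without proof, deferring to the constructive realization of the Boolean operations on spline boundary representations in~\cite{Zhang2020:YinSets}, which is exactly the mechanism you invoke. Your reduction to closure under $^\perp$ (via $\partial\calS = \partial\calS^\perp$ for regular open sets) and under $\cuppp$ (via the fact that the reassembled boundary consists of sub-arcs of the original cubic splines, which remain cubic splines after re-knotting), together with the trivial membership of $\emptyset$ and $\bbR^2$, is a faithful and complete fleshing-out of the intended argument.
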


Efficient algorithms have been developed in~\cite[Section 4]{Zhang2020:YinSets}
to implement the Boolean operations in Theorem~\ref{thm:booleanAlgebra}.
In this work, they will be used for cut cell generation and merging.

Consider the representation of a Yin set.
If $\gamma$ is an oriented Jordan curve,
denote by $\intr(\gamma)$ the complement of $\gamma$
that always lies at the left of an observer who traverses $\gamma$
according to its orientation.
It is shown in~\cite[Corollary 3.13]{Zhang2020:YinSets}
that every Yin set $\calY \neq \emptyset, \bbR^2$ can be uniquely expressed as
\begin{equation}
 \calY = \bigcup_{j} {}^{\perp\perp} \bigcap_{i} \intr(\gamma_{j, i}),
\end{equation}
where for each fixed $j$ the collection of oriented Jordan curves
$\{\gamma_{j,i}\}_{i}$ is pairwise almost disjoint
and corresponds to a connected component of $\calY$.
The whole collection $\{\gamma_{j,i}\}$ can be further arranged in a fashion
such that the topological information (such as Betti numbers)
can be easily extracted within constant time;
see~\cite[Definition 3.17]{Zhang2020:YinSets}.


\subsection{Poised lattice generation}
\label{sec:PLG}

Traditional finite difference (FD) methods
have limited applications in irregular domains,
as they often replace the spatial derivatives in partial differential equations 
by one-dimensional FD formulas or their tensor-product counterparts.
To overcome this limitation,
the PLG algorithm~\cite{Zhang:PLG} was proposed 
to generate poised lattices in complex geometries.
Once the interpolation lattices have been generated,
high-order discretization of the differential operators can be obtained
via multivariate polynomial fitting.

In this section we give a brief review of the PLG problem.
We begin with the relevant notions in multivariate interpolation.
\begin{definition}[Lagrange interpolation problem (LIP)]
Denote by $\Pi_n^{\Dim}$ the vector space of 
all $\Dim$-variate polynomials of degree no more than $n$
with real coefficients. 
Given a finite number of points $\bmx_1, \bmx_2, \cdots, \bmx_N \in \bbR^\Dim$
and the same number of data $f_1, f_2, \cdots, f_N \in \bbR$, 
the \emph{Lagrange interpolation problem} seeks a polynomial $f \in \Pi_n^\Dim$ such that
\begin{equation}
f(\bmx_j) = f_j, \quad \forall j=1, 2, \cdots, N, 
\label{eq:finiteDiffLIP}
\end{equation}
where $\Pi_n^\Dim$ is the \emph{interpolation space} 
and $\bmx_j$'s the \emph{interpolation sites}. 
\label{def:finiteDiffLIP}
\end{definition}

The sites $\left\{ \bmx_j \right\}_{j=1}^N$ are said to be \emph{poised} in $\Pi_n^\Dim$
if there exists a unique $f \in \Pi_n^\Dim$ satisfying (\ref{eq:finiteDiffLIP})
for any given data $\left\{ f_j \right\}_{j=1}^N$. 
Suppose $\left\{ \phi_j \right\}$ is a basis of $\Pi_n^\Dim$.
It is easy to see that the interpolation sites are poised
if and only if $N = \dim \Pi_n^\Dim = \binom{n+\Dim}{n}$
and the $N \times N$ \emph{sample matrix}
\begin{equation}
M(\left\{ \phi_j \right\} ; \left\{ \bmx_k \right\}) = \Big[ M_{jk} \Big] := \Big[ \phi_j(\bmx_k) \Big]
\quad (1 \le j, k \le N)
\label{eq:sampleMatrix}
\end{equation}
is non-singular. 

\begin{definition}[PLG in $\mathbb{Z}^\Dim$~\cite{Zhang:PLG}]
\label{def:PLG}
    Given a finite set of feasible nodes $K\in \mathbb{Z}^\text{D}$, 
	 a starting point $\mathbf{q}\in K$, and a degree $n\in \mathbb{Z}^+$,
	 the \emph{problem of poised lattice generation} is to
	 choose a lattice $\mathcal{T}\subset K$ such that 
	 $\mathcal{T}$ is poised in $\Pi_n^\Dim$ and $\#\mathcal{T} = \dim \Pi_n^\Dim$.
\end{definition}

In \cite{Zhang:PLG}, we proposed a novel and efficient PLG algorithm to 
 solve the PLG problem via an interdisciplinary research of approximation theory,
 abstract algebra, and artificial intelligence.
The stencils are based on the transformation of the principal lattice.
In this work, we directly apply the algorithm to our finite volume discretization.
The starting point $\mathbf{q}$ corresponds to the cell index 
at which the spatial operators are discretized,
 and the feasible nodes $K$ formed by collecting the indices of nearby cut cells.
 
In general cases it is still unclear that
 whether a direct application of the PLG algorithm
 will generate poised stencils in finite volume formulation.
Finding a strict proof of poisedness 
 will be a topic of our future research.
Nevertheless, the poisedness in finite volume formulation is supported
 by the extensive numerical evidences.
For example, the condition numbers of the $(\Dim,n) = (2,4)$ lattices used in this paper
generally do not exceed $10^4$ in all problems of interest.

\subsection{Least squares problems}
\label{sec:LSP}

To derive the approximation to the differential operators,
we need the following results on weighted least squares fitting.

\begin{definition}
For a symmetric positive definite matrix $W \in \bbR^{n \times n}$, 
the \emph{$W$-inner product} is
\begin{equation}
\langle u, v \rangle_W := u^T W v
\end{equation}
and the \emph{$W$-norm} is
\begin{equation}
\Vert u \Vert_W := \langle u, u \rangle_W^{1/2}
\end{equation}
for $u, v \in \bbR^n$. 
\end{definition}

\begin{proposition}
Let $A \in \bbR^{m \times n}$ be a matrix with full column rank,  and let 
$W \in \bbR^{m \times m}$ be a symmetric positive definite matrix. 
\begin{enumerate}[label={(\roman*)}, nosep, leftmargin=*]
\item For any $b \in \bbR^m$, the optimization problem
\begin{equation}
\min_{x \in \bbR^n} \left\Vert Ax - b \right\Vert_{W}
\end{equation}
admits a unique solution $x_{\mathrm{LS}} = \left(A^T W A\right)^{-1} A^T W b$. 
\item For any $d \in \bbR^n$, the constrained optimization problem
\begin{equation}
\min_{x \in \bbR^m} \left\Vert x \right\Vert_{W^{-1}} \quad \mathrm{s.t.} \quad A^T x=d
\end{equation}
admits a unique solution $x_{\mathrm{NM}} = W A \left(A^T W A\right)^{-1} d$. 
\end{enumerate}
\label{prop:leastSquares}
\end{proposition}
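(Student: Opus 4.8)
The plan is to prove both parts by reducing the weighted problems to their classical unweighted ($W = I$) counterparts via the unique symmetric positive definite square root $W^{1/2}$ of $W$, which exists and is invertible precisely because $W$ is symmetric positive definite. For part (i), I would first rewrite the objective as $\|Ax - b\|_W = \|W^{1/2}(Ax-b)\|_2 = \|\tilde A x - \tilde b\|_2$, where $\tilde A := W^{1/2} A$ and $\tilde b := W^{1/2} b$. Since $W^{1/2}$ is nonsingular, $\tilde A$ inherits the full column rank of $A$, so the classical least squares theory applies: the unique minimizer solves the normal equations $\tilde A^T \tilde A\, x = \tilde A^T \tilde b$. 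Substituting $\tilde A^T \tilde A = A^T W A$ and $\tilde A^T \tilde b = A^T W b$ then yields $x_{\mathrm{LS}} = (A^T W A)^{-1} A^T W b$.

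For part (ii), I would apply the analogous change of variable $y := W^{-1/2} x$, so that $\|x\|_{W^{-1}} = \|y\|_2$ and the constraint $A^T x = d$ becomes $\tilde A^T y = d$ with the same $\tilde A = W^{1/2} A$. Because $A^T$ has full row rank $n \le m$, this underdetermined system is consistent for every $d$, and the problem reduces to finding the minimum-$2$-norm solution of $\tilde A^T y = d$. The classical formula gives $y = \tilde A (\tilde A^T \tilde A)^{-1} d$, whence $x_{\mathrm{NM}} = W^{1/2} y = W A (A^T W A)^{-1} d$. As an alternative that avoids quoting the unweighted results, I could instead argue directly: for (i) expand the quadratic $\|Ax-b\|_W^2$ and set its gradient to zero to recover the normal equations, and for (ii) introduce a Lagrange multiplier $\lambda$ for the constraint, solve $W^{-1} x = A\lambda$ for $x = W A \lambda$, and determine $\lambda = (A^T W A)^{-1} d$ from the constraint.

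Either route hinges on a single fact, which I expect to be the main point to nail down rather than a genuine obstacle: the $n \times n$ matrix $A^T W A$ is nonsingular, in fact symmetric positive definite. This is where both hypotheses are actually used. I would prove it by noting that $x^T (A^T W A) x = \|Ax\|_W^2 \ge 0$, with equality forcing $Ax = 0$ by positive-definiteness of $W$ and hence $x = 0$ by the injectivity of $A$; thus $A^T W A \succ 0$ and its inverse in the stated formulas is well defined. The same positive-definiteness makes the objective in (i) strictly convex, and the objective in (ii) strictly convex on the nonempty affine feasible set, which delivers the asserted \emph{uniqueness} of the minimizers in both parts.
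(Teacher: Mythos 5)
Your proof is correct; the key point you isolate (that $A^TWA$ is symmetric positive definite, which is where both the full column rank of $A$ and the positive definiteness of $W$ enter) is exactly the right one, and both the whitening reduction via $W^{1/2}$ and the alternative normal-equation/Lagrange-multiplier route are sound. The paper itself offers no argument, merely citing Golub and Van Loan, and the standard textbook proof referenced there is essentially the one you give, so there is nothing to contrast.
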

\begin{proof}
See~\cite[\S 5.3, \S 5.6, \S 6.1]{Golub}.
\end{proof}

\subsection{Numerical cubature}
\label{sec:NC}

In finite volume discretization,
we need to evaluate the volume integral
of a given function $f(x, y)$ over a control volume
$\calC \in \bbY_{\mathrm{c}}$. 
For this purpose, 
we invoke the Green's formula to convert
\begin{equation}
\iint_{\calC}  f(x, y) \rmd x \rmd y
= \oint_{\partial \calC} F(x, y) \rmd y
\label{eq:volumeIntegral}
\end{equation}
into a line integral, 
where $\partial \calC$ is the boundary of $\calC$ and 
$F(x, y) = \int_{\xi_0}^{x} f(\xi, y) \rmd \xi$
is a primitive of $f(x, y)$ with respect to $x$,
with $\xi_0$ an arbitrary fixed real number.
Suppose $\left( x(t), y(t) \right) \left( t \in [0,1] \right)$
is a smooth parametrization of $\partial \calC$. 
Then we can write 
\begin{subequations}
\begin{align}
 \oint_{\partial \calC} F(x, y) \rmd y &= \int_0^1 F(x(t), y(t)) \dot{y}(t) \rmd t \\
&=  \int_0^1\dot{y}(t) \int_{\xi_0}^{x(t)} f(\xi, y(t)) \rmd \xi  \rmd t, 
\label{eq:iteratedIntegral}
\end{align}
\end{subequations}
into a two-fold iterated integral. 
The integral (\ref{eq:iteratedIntegral}) 
can be evaluated using one-dimensional numerical quadratures 
such as Gauss-Legendre quadrature.
If the boundary $\partial \calC$ is only piecewise smooth, 
we simply apply (\ref{eq:iteratedIntegral}) to each piece and sum up the results.
If the integrand $f(x, y)$ is a bivariate polynomial, 
then (\ref{eq:iteratedIntegral}) expands to a (possibly very high-order) polynomial of $t$
whose coefficients depend on the parametrization $\left( x(t), y(t) \right)$ and the integrand $f(x, y)$. 
This polynomial can then be evaluated to machine precision without quadratures.

Although $\xi_0$ can be arbitrary, 
it should be selected to minimize round-off errors. 
A typical choice is the center of the bounding box of $\calC$.
See~\cite{Sommariva} for a detailed error analysis
on using Gauss-Legendre quadrature to evaluate the cubature formula (\ref{eq:iteratedIntegral}).

\section{Spatial discretization}
\label{sec:discretization}

\subsection{Cut-cell generation}
\label{sec:grid}
 
Let $R$ be a rectangular region containing $\Omega \in \bbY_\mathrm{c}$ and
partitioned uniformly into a collection of rectangular cells
\begin{equation}
\rmC_{\bmi} := \Big( \bmi h, (\bmi + \bbone) h \Big), 
\end{equation}
where $h$ is the uniform spatial step size\footnote{
The methodology in this work applies to non-uniform step sizes as well. 
Using open intervals instead of closed intervals 
is for consistency with the representation of Yin sets.
}, 
$\bmi \in \bbZ^{\Dim}$ a multi-index, 
and $\bbone \in \bbZ^{\Dim}$ the multi-index with all its components equal to one.
The higher face of the cell $\rmC_{\bmi}$ in dimension $d$ is denoted by
\begin{equation}
\rmF_{\bmihalf} := \Big( (\bmi + \basis^d) h, (\bmi + \bbone) h \Big), 
\end{equation}
where $\basis^d \in \bbZ^\Dim$ is the multi-index 
with 1 as its $d$th component ($1\le d \le \Dim$) and 0 otherwise. 
The cut cell method embeds the irregular domain $\Omega$ into the rectangular grid $R$. 
We define the cut cells by
\begin{equation}
\calC_{\bmi} := \rmC_\bmi \cap \Omega, 
\label{eq:cutCell}
\end{equation}
the cut faces by
\begin{equation}
\calF_{\bmihalf} := \rmF_{\bmihalf} \cap \Omega, 
\label{eq:cutFace}
\end{equation}
and the portion of domain boundary contained in a cut cell by
\begin{equation}
\calS_{\bmi} := \rmC_{\bmi} \cap \partial \Omega. 
\label{eq:cutBdry}
\end{equation}

A cut cell is said to be an \emph{empty} cell if $\mathcal{C}_\ibold = \emptyset$,
 a \emph{pure cell} if $\mathcal{C}_\ibold = \rmC_\ibold$, 
 or an \emph{interface cell} otherwise.
In practice, cut cells may be arbitrarily small 
 and the elliptic operator will be very ill-conditioned.
A number of approaches have been proposed to address this problem: 
 for example, the application of cell merging~\cite{Ji2010},
 the utilization of redistribution~\cite{Almgren1994}, 
 or the formulation of special differencing schemes for the discretization~\cite{berger1991}. 
Of these various approaches, we choose to use a simple cell-merging methodology 
 to circumvent the small-cell problem.
 
\begin{algorithm}[htbp]
  \caption{Cut-cell generation and merging.}
  \label{alg:merging}
  \begin{algorithmic}[1]
    \Require a computational domain $\Omega\in\mathbb{Y}_c$; a Cartesian grid with step size $h$; 
             a user-specified threshold $\theta\in(0,1)$.
    \Ensure a set of cut cells $\left\{\mathcal{C}_\ibold\right\}_{\ibold\in\mathbb{Z}^{\Dim}}$.
    \renewcommand{\algorithmicrequire}{\textbf{Precondition:}}
    \renewcommand{\algorithmicensure}{\textbf{Postcondition:}}
    \Ensure $\bigcup^{\bot\bot}_{\ibold} \mathcal{C}_\ibold = \Omega$ 
            and the volume of each cut cell is not less than $\theta h^2$.
    \renewcommand{\algorithmicrequire}{\textbf{Input:}}
    \renewcommand{\algorithmicensure}{\textbf{Output:}}
    
    \State
    $\left\{\mathcal{C}_\ibold\right\}_{\ibold\in\mathbb{Z}^{\Dim}} \leftarrow \left\{ \mathrm{C}_\ibold \cap \Omega \right\}_{\ibold\in\mathbb{Z}^{\Dim}}$ : the set of cut cells obtained by embedding $\Omega$ into the Cartesian grid.
    \State
    Preprocess the cut cells $\mathcal{C}_\ibold = \left( \bigcup^{\bot\bot} \mathcal{C}_\ibold^k \right)_{k=1}^{n_c}$ with $n_c (n\geq 2)$ connected components by
    \begin{equation*}
      \mathcal{C}_\ibold \leftarrow \mathcal{C}_\ibold^m, 
      \quad \mathcal{C}_\jbold \leftarrow \mathcal{C}_\jbold \cup^{\bot\bot} \mathcal{C}_\ibold^k,
    \end{equation*}
    where $\mathcal{C}_\ibold^m = \max\{ \Vert\mathcal{C}_\ibold^1\Vert, \Vert\mathcal{C}_\ibold^2\Vert, \cdots, \Vert\mathcal{C}_\ibold^{n_c}\Vert\}$ and $\mathcal{C}_\jbold$ is the largest neighboring cell of $\mathcal{C}_\ibold^k$ for $k\neq m$.
    \For{\textbf{each} interface cell $\ibold$ satisfies $\Vert \mathcal{C}_\ibold \Vert \leq \theta h^\Dim$}
    \State
    Locate the no-empty neighboring cells along the normal vector direction to the cut face of $\mathcal{C}_\ibold$:
    \begin{equation*}
    	M = \{ \jbold\in\mathbb{Z}^\Dim : \Vert \jbold-\ibold\Vert_1 \leq 1 \;\;\text{and}\;\; \Vert \mathcal{C}_\jbold \Vert \neq 0\}.
    \end{equation*}
    \State
    Select $\jbold^+ = \arg\max_{\jbold\in M}  \Vert \mathcal{C}_\ibold \cap \mathcal{C}_\jbold \Vert$.
    \State   
    Set $\mathcal{C}_{\jbold^+} \leftarrow \mathcal{C}_{\jbold^+}\bigcup^{\bot\bot}\mathcal{C}_\ibold $ and $\mathcal{C}_\ibold \leftarrow \emptyset$.
    \EndFor

\end{algorithmic}	
\end{algorithm}

We use Algorithm~\ref{alg:merging} for 
 generating cut cells and handling the small-cell problem.
In Algorithm~\ref{alg:merging}, 
 to determine these initial cut cells in line 1 in a robust manner,  
Boolean operations are used.
A detailed description of the implementation 
 of these Boolean operations can be found in~\cite{Zhang2020:YinSets}. 
After the Boolean operations, if a cut cell has multiple connected components,
 we first merge all components except the one with the largest volume into its neighbors.
This situation is common when the boundary is a sharp interface,
 even if the grid size is very small.
Lines 3-7 describe a simple cell-merging procedure.
The cell to merge with is selected by finding the neighboring cells 
 in the direction of the normal vector to the cut face
 and choose the one with the largest common face.
It should be noted that we do not store any variable values for the small cells. 
More specifically, variable values are only stored for the merged cell, 
 consisting of the small cell and its neighbor cell. 
An example is shown in Figure~\ref{fig:gridsExample}.

\begin{figure}[htbp]
	\center
	\includegraphics[width=0.4\linewidth]{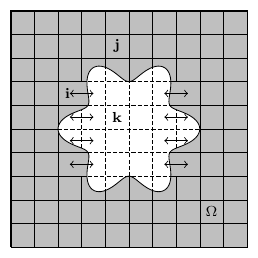}
	\caption{A Cartesian grid in the cut cell method.
             Here small cells with volume fraction below 0.3 have been merged.
             The cut cells $\mathbf{i}$, $\mathbf{j}$, and $\mathbf{k}$ are interface, 
             pure and empty cells, respectively.
             The merged cut cells are linked by the symbol ``$\leftrightarrow$".}
    \label{fig:gridsExample}
\end{figure}

\subsection{Finite volume approximation}
\label{sec:FVAppro}

Define the averaged $\phi$ over cell $\ibold$ by
\begin{equation}
\brk{\phi}_{\bmi} := \frac {1} {\lVert \calC_\bmi \rVert} \int_{\calC_{\bmi}} \phi(\bmx) \ \rmd \bmx,
\end{equation}
the averaged $\phi$ over face $\bmihalf$ by
\begin{equation}
\brk{\phi}_{\bmihalf} := 
\frac {1} {\lVert \calF_{\bmihalf} \rVert} \int_{\calF_{\bmihalf}} \phi(\bmx) \ \rmd \bmx,
\end{equation}
and averaged $\phi$ over cell boundary $\mathcal{S}_\ibold$ by
\begin{equation}
\bbrk{\phi}_{\bmi} := \frac {1} {\Vert \calS_\bmi \Vert}
\int_{\calS_\bmi} \phi(\bmx) \ \rmd \bmx.
\end{equation}
On cut cells sufficiently far from irregular boundaries,
a routine Taylor expansion justifies 
the following fourth-order difference formulas
for the discrete gradient and discrete Laplacian:
\begin{align}
\label{eq:stdGrad}
\brk{\frac {\partial \phi} {\partial x_d}}_{\bmi} &= \frac {1} {12 h} \Big(
- \brk{\phi}_{\bmi + 2 \basis^d} 
+ 8 \brk{\phi}_{\bmi + \basis^d}
- 8 \brk{\phi}_{\bmi - \basis^d}
+ \brk{\phi}_{\bmi - 2 \basis^d} \Big)
+ \bigO(h^4), \\
\label{eq:stdLap}
\brk{\Delta \phi}_{\bmi} 
&= \frac {1} {12h^2} \sum_d \Big( -\brk{\phi}_{\bmi + 2\basis^d} + 16 \brk{\phi}_{\bmi + \basis^d} 
-30 \brk{\phi}_{\bmi} + 16 \brk{\phi}_{\bmi - \basis^d} + \brk{\phi}_{\bmi - 2\basis^d} \Big) + \bigO(h^4). 
\end{align} 
For the cross derivative $\frac {\partial^2 \phi} {\partial x_i \partial x_j} (i \neq j)$, 
a fourth-order discretization is obtained
from the iterated applications of (\ref{eq:stdGrad}) in the $i$th and $j$th directions.
The stencils of the Laplacian operator and the cross operator are thus
\begin{equation}
\mathcal{S}_\mathrm{Lap}(\ibold) = \{ \ibold+m\ebold^d : m = 0, \pm 1, \pm 2, d = 1, 2.\},
\end{equation}
and
\begin{equation}
\mathcal{S}_\mathrm{Cro}(\ibold) = \{ \ibold + m_1\ebold^1 + m_2\ebold^2 : m_1, m_2 = \pm 1, \pm2.\}.
\end{equation}

A cell $\ibold$ will be called a \emph{regular cell} 
 if each cell $\jbold\in \mathcal{S}_\mathrm{Lap}(\ibold) \cup \mathcal{S}_\mathrm{Cro}(\ibold)$
 is a pure cell.
A non-empty cell is either regular or \emph{irregular}. 
The region covered by regular and irregular cells 
 are called the \emph{regular region} and the \emph{irregular region}, respectively.
For a regular cell $\ibold$,
 the elliptic operator can be evaluated using Equations \eqref{eq:stdGrad} and \eqref{eq:stdLap}.
For an irregular cell $\ibold$,
 the elliptic operator can be evaluated by 
 first generating poised lattices near $\mathcal{C}_\ibold$ (Section \ref{sec:PLG}),
 then locally reconstructing a multivariate polynomial (Section \ref{sec:LSP} and \ref{sec:discretizationOfSpatialOp}),
 and finally evaluating the multi-dimensional quadratures (Section \ref{sec:NC}).


\subsection{Discretization of the spatial operators}
\label{sec:discretizationOfSpatialOp}

In this section, we consider the discretization of 
a linear differential operator $\calL$ of order $q$
near the irregular domain boundary.
Although we are mainly concerned with the second-order elliptic operator
\begin{equation}
 \label{eq:theEllipticOperator}
 u \mapsto a \frac {\partial^2 u} {\partial x_1^2}
 + b \frac {\partial^2 u} {\partial x_1 \partial x_2}
 + c \frac {\partial^2 u} {\partial x_2^2},
\end{equation}
the methodology applies to other linear operators as well.
Denote by $\calN$ the operator of the boundary condition:
for example, 
$\calN = \mathbf{I}_\text{d}$ for the Dirichlet condition, 
$\calN = \partial / \partial \bmn$ for the Neumann condition, 
and $\calN = \gamma_1 + \gamma_2 \cdot \partial / \partial \bmn~(\gamma_1, \gamma_2 \in \bbR)$ for the Robin condition. 
Hereafter we fix $\bmi \in \bbZ^\Dim$ and let
\begin{equation}
\calX = \calX(\bmi) = \calS_{\mathrm{PLG}}(\ibold) \cup \left\{ \calS_\bmi \right\}
\end{equation}
be the stencil for discretizing $\calL$ on $\calC_\bmi$,
where $\calS_{\mathrm{PLG}}(\ibold) = \left\{\calC_{\bmj_1}, \calC_{\bmj_2}, \cdots, \calC_{\bmj_N}\right\}$
 is the poised lattices generated by PLG algorithm with $N=\dim{\Pi_n^\Dim}$; 
 see Figure~\ref{fig:exampleStencil} for an example.
Traditional methods usually select larger stencil or keep adding nearby cells 
 to ensure the solvability of the linear system.
But blindly expanding the stencil may lead to a large number of redundant points,
 deteriorating computational efficiency.
This is precisely an advantage of our PLG algorithm.

\begin{figure}[htb]
	\centering
	\includegraphics[width=0.5\linewidth]{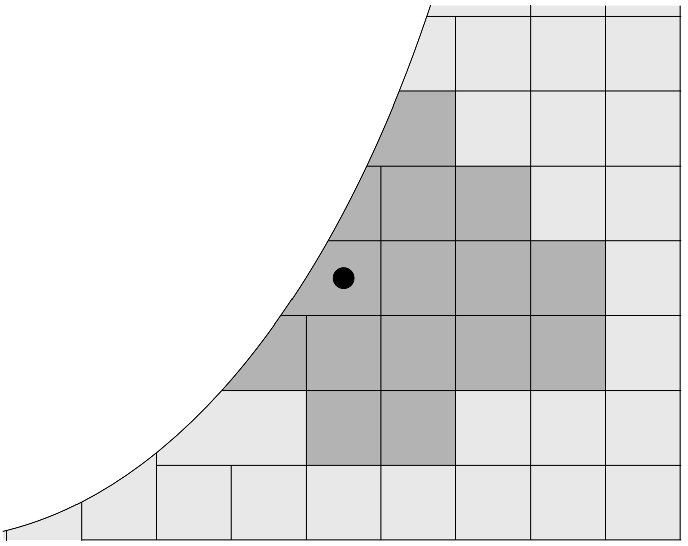}
	\caption{An example of poised lattices covered by dark-shaded cells 
	 in finite-volume discretization for $\Dim = 2$ and $n=4$.
	 The cell $\calC_{\bmi}$ is marked by a bullet $\bullet$.}
	\label{fig:exampleStencil} 
\end{figure}

We have assumed that $\calX$ contains the boundary face $\calS_\bmi$
and the treatment of the other case is similar. 
In the following, 
we reserve the symbol $n$ for the degree of polynomial fitting
(and of the lattice $\calX$, of course), 
the index $k\ (1 \le k \le N)$ for the interpolation sites, 
and the index $j\ (1 \le j \le N)$ for the basis of $\Pi_n^\Dim$.
Given the cell averages and the boundary data
\begin{equation}
\begin{aligned}
\hat{u} &= \left[ u_1, \cdots, u_N, u_{\mathrm{b}} \right]^T \\
&= \left[ \brk{u}_{\bmj_1}, \cdots , \brk{u}_{\bmj_N}, \bbrk{\calN u}_{\bmi} \right]^T
\in \bbR^{N+1}, 
\end{aligned}
\end{equation}
the goal is to determine the coefficients
\begin{equation}
\hat{\beta} = \left[ \beta_1, \cdots, \beta_N, \beta_{\mathrm{b}}\right]^T
\in \bbR^{N+1}, 
\end{equation}
such that the linear approximation formula
\begin{equation}
\begin{aligned}
\brk{\calL u}_\bmi &= \sum_{k=1}^N \beta_k u_k 
+ \beta_{\mathrm{b}} u_{\mathrm{b}} + \bigO(h^{n-q+1}) \\
&= \hat{\beta}^T \hat{u} + \bigO(h^{n-q+1})
\end{aligned}
\label{eq:goalOfApproximation}
\end{equation}
holds for all sufficiently smooth function $u : \bbR^\Dim \rightarrow \bbR$. 

We establish the equations on $\hat{\beta}$ 
by requiring that 
(\ref{eq:goalOfApproximation})
holds exactly for all $u \in \Pi_n^\Dim$, i.e.\ 
\begin{equation}
\brk{\calL u}_{\bmi} = \sum_{k=1}^N \beta_k u_k + \beta_{\mathrm{b}} u_{\mathrm{b}}
\end{equation}
for all $u \in \Pi_n^\Dim$. 
Since $\left\{ \phi_j \right\}_{j=1}^N$ is a basis of $\Pi_n^\Dim$, 
it is equivalent to say
\begin{equation}
\brk{\calL \phi_j}_{\bmi} = \sum_{k=1}^N \beta_k \brk{\phi_j}_{\bmj_k}
+ \beta_{\mathrm{b}} \bbrk{\phi_j}_{\bmi}
\end{equation}
for $j = 1, \cdots, N$, or simply
\begin{equation}
M \hat{\beta} = \hat{L}, 
\end{equation}
where 
\begin{align}
\label{eq:detailedSampleMatrix}
M &= 
\begin{bmatrix}
\brk{\phi_1}_{\jbold_1} & \brk{\phi_1}_{\jbold_2} & \cdots & \brk{\phi_1}_{\jbold_N} & \bbrk{\calN \phi_1}_{\bmi} \\
\brk{\phi_2}_{\jbold_1} & \brk{\phi_2}_{\jbold_2} & \cdots & \brk{\phi_2}_{\jbold_N} & \bbrk{\calN \phi_2}_{\bmi} \\
\vdots & \vdots & \ddots & \vdots & \vdots \\
\brk{\phi_N}_{\jbold_1} & \brk{\phi_N}_{\jbold_2} & \cdots & \brk{\phi_N}_{\jbold_N} & \bbrk{\calN \phi_N}_{\bmi} \\
\end{bmatrix}
\in \bbR^{N \times (N+1)}, 
%
\end{align}
and
\begin{align}
\hat{L} &= 
\begin{bmatrix}
\brk{\calL \phi_1}_{\bmi} & \brk{\calL \phi_2}_{\bmi} & \cdots & \brk{\calL \phi_N}_{\bmi}
\end{bmatrix}^T\in \bbR^N. 
\end{align}
This is an underdetermined system on $\hat{\beta}$
and we seek the weighted minimum norm solution via
\begin{equation}
\min_{\beta \in \bbR^{N+1}} \left\Vert \beta \right\Vert_{W^{-1}} \quad
\mathrm{s.t.} \quad M \beta = \hat{L}. 
\end{equation}
Since $M$ has full row rank,
it follows from Proposition~\ref{prop:leastSquares} that
\begin{equation}
\hat{\beta} = W M^T \left( M W M^T \right)^{-1} \hat{L}. 
\label{eq:derivedCoefAgain}
\end{equation}
The weight matrix $W$ is used to penalize large coefficients 
on the cells far from $\calC_{\bmi}$.
For simplicity, we set $W^{-1} = \diag\left( w_1, \cdots, w_N, w_{\mathrm{b}} \right)$,
where 
\begin{subequations}
\begin{align}
w_k &= \max \left\{ \Vert \bmj_k - \bmi \Vert_2, w_{\mathrm{min}} \right\}, \\ 
w_{\mathrm{b}} &= w_{\mathrm{min}}, 
\end{align}
\label{eq:weightDef}
\end{subequations}
and $w_{\mathrm{min}}$ is set to $1/2$ to avoid zero weights.

In practice, to maintain a good conditioning, 
 we use the following basis (with recentering and scaling)
\begin{equation}
\Phi_n^D(h;\bmp) = \left\{ \left( \frac {\bmx - \bmp} {h} \right)^{\bmalpha} : 
0 \le \vert \bmalpha \vert \le n \right\}, 
\end{equation}
where $\bmp \in \bbR^D$ is the center (or simply the center of the bounding box) 
of the interpolation sites.

\subsection{Discrete elliptic problems}
\label{sec:discreteElliptic}

In this section, we discuss the fourth-order discretization
of the constant-coefficient elliptic equation (\ref{eq:ccEllipticEq})
with the boundary condition
\begin{equation}
\bmn \cdot \nabla u = g \quad \text{on } \partial\Omega.
\end{equation}
For ease of exposition, we specify the Neumann condition
but the treatment of the other cases is similar. 
Define $\hat{u}$ to be the vector consisting of 
the cell averages of the unknown function $u$, 
and define $\hat{f}$ accordingly. 
Define $\hat{g}$ to be the vector consisting of 
the boundary-face averages of the boundary condition $g$. 
Combining the fourth-order difference formula (\ref{eq:stdLap})
and the third-order 
finite-volume PLG approximation (\ref{eq:goalOfApproximation}), 
we obtain the discretization
\begin{equation}
L_0\left(\hat{u}, \hat{g}\right) = \hat{f}. 
\label{eq:PoissonDiscretization}
\end{equation}
Note that the linearity of $L_0(\cdot, \cdot)$ in each variable is implied by our construction;
hence we can rewrite (\ref{eq:PoissonDiscretization}) into
\begin{equation}
L \hat{u} + N \hat{g} = \hat{f}, 
\label{eq:PoissonLinearDiscretization}
\end{equation}
where $L$ and $N$ are both matrices. 
Next we transform (\ref{eq:PoissonLinearDiscretization}) into the residual form
\begin{equation}
L \hat{u} = \hat{r} \coloneqq \hat{f} - N \hat{g}, 
\label{eq:PoissonResidualForm}
\end{equation}
and further split it into two row blocks, i.e.\ 
\begin{equation}
\begin{bmatrix}
L_{11} & L_{12} \\
L_{21} & L_{22}
\end{bmatrix}
\begin{bmatrix}
\hat{u}_1 \\
\hat{u}_2
\end{bmatrix}
=
\begin{bmatrix}
\hat{r}_1 \\
\hat{r}_2
\end{bmatrix}
. 
\label{eq:PoissonSplitDiscretization}
\end{equation}
The split $\hat{u} = \left[ \hat{u}_1^T \ \hat{u}_2^T \right]^T$
is based on the discretization type:
if the cell $\ibold$ is a regular cell, 
then the cell average $\brk{u}_{\bmi}$ is contained in $\hat{u}_1$; 
otherwise it is contained in $\hat{u}_2$. 
Consequently, the sub-block $L_{11}$ has a regular structure\footnote{
If Poisson's equation is considered, the sub-block $L_{11}$ resembles
the fourth-order, nine-point discretization
of Poisson's equation in 2D rectangular domains; see e.g.~\cite{Zhang2014}. }
whereas the structures of the other sub-blocks are only known to be sparse. 

\section{Multigrid algorithm}
\label{sec:multigrid}

In this section, we present the multigrid algorithm
for solving the discrete elliptic equation (\ref{eq:PoissonSplitDiscretization}). 
The overall procedure is geometric-based. 
First we initialize a hierarchy of successively coarsened grids
\begin{equation}
\Omega^{\ast} = \left\{ \Omega^{(m)} : 0 \le m \le M \right\}, 
\end{equation}
where $\Omega^{(m)}$ is obtained by embedding $\Omega$
into a rectangular grid of step size $h^{(m)} = 2^m h^{(0)}$. 
On each level 
we construct the discrete elliptic operator $L^{(m)}$ 
via the finite-volume PLG discretization discussed in the last section. 
We stop at some $\Omega^{(M)}$ where the direct solution (say, via LU factorization)
of the discrete linear system is inexpensive. 

We should mention that the finite-volume PLG discretization
prohibits the direct application of the traditional geometric multigrid method.
This is because:
(1) The Gauss-Siedel and the (weighted) Jacobi iteration 
are not guaranteed to converge due to the presence of the irregular sub-blocks
$L_{12}, L_{21}$ and $L_{22}$ in (\ref{eq:PoissonSplitDiscretization}); 
(2) Simple grid-transfer operators, 
such as the volume-weighted restriction and the linear interpolation
cannot be applied near the irregular boundary. 
In the following, we adhere to the multigrid framework \cite{Briggs:A_Multigrid_Tutorial}
but propose modifications to the V-cycle (Algorithm~\ref{alg:VCycle}) components
so that the algorithm is still effective with high-order scheme 
 and arbitrarily complex irregular boundaries.
To improve V-cycles,
 the full multigrid (FMG) cycle, 
 which utilizes the multigrid concept to build an accurate initial guess, 
 is also implemented; see Figure~\ref{fig:FMG-cycle}.
Although FMG cycles are more expensive than V-cycles,
 it has been widely used because of the better initial guesses 
 and the optimal complexity~\cite{Briggs:A_Multigrid_Tutorial}.

\begin{algorithm}[htbp]
\caption{\textbf{VCycle}$(L^{(m)}, \hat{u}^{(m)}, \hat{r}^{(m)}, \nu_1, \nu_2)$}
\label{alg:VCycle}
\begin{algorithmic}[1]
\Require
An integer $m$ indicating the level, with 0 being the finest; \\
the discrete elliptic operator $L^{(m)}$ on the $m$th level; \\
the initial guess $\hat{u}^{(m)}$; \\
the residual $\hat{r}^{(m)}$ on the $m$th level; \\
the smooth parameters $\nu_1, \nu_2$. 
\Ensure The solution to the algebraic equation
$L^{(m)} \hat{u}^{(m)} = \hat{r}^{(m)}$. 
\If{$m = M$}
\State Solve $L^{(M)} \hat{u}^{(M)} = \hat{r}^{(M)}$ using the bottom solver.
\label{line:bottomSolver}
\Else
\State Apply the smoother to $L^{(m)} \hat{u}^{(m)} = \hat{r}^{(m)}$ for $\nu_1$ times.
\label{line:preSmooth}
\State Compute the coarse residual by restriction: \\
\quad $\hat{r}^{(m+1)} = \text{\textbf{Restrict}}(\hat{r}^{(m)} - L^{(m)} \hat{u}^{(m)}).$
\label{line:restrict}
\State Recursively solve the algebraic equation on the coarse grid: \\
\quad $\hat{u}^{(m+1)} = \mathbf{0}$, \\
\quad $\hat{u}^{(m+1)} = \text{\textbf{VCycle}}(L^{(m+1)}, \hat{u}^{(m+1)}, \hat{r}^{(m+1)}, \nu_1, \nu_2)$. 
\State Prolong the correction and update the solution: \\
\quad $\hat{u}^{(m)} = \hat{u}^{(m)} + \text{\textbf{Prolong}}(\hat{u}^{(m+1)})$. 
\label{line:prolong}
\State Apply the smoother to $L^{(m)} \hat{u}^{(m)} = \hat{r}^{(m)}$
for $\nu_2$ times. 
\label{line:postSmooth}
\EndIf \\
\Return $\hat{u}^{(m)}$.
\end{algorithmic}
\end{algorithm}

\begin{figure}[htbp]
	\center
	\includegraphics[width=0.8\linewidth]{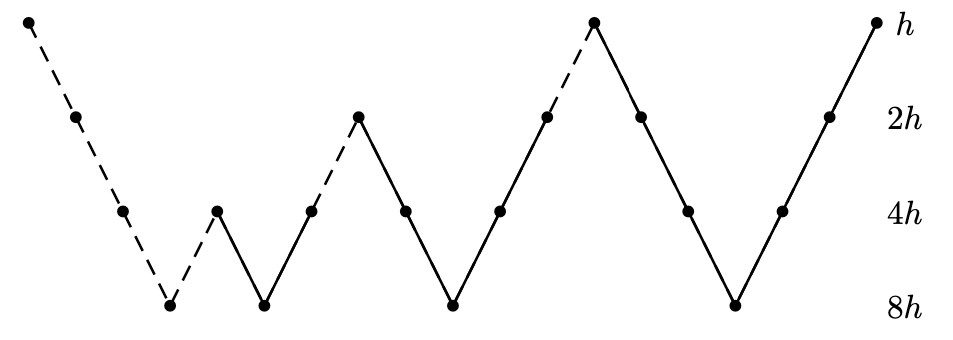}
	\caption{Illustration of the FMG cycle for a grid with four levels.
	FMG begins with a descent to the coarsest grid $\Omega^{8h}$;
	this is represented by the downward dashed line segments. 
	Then the solution is prolongated to $\Omega^{4h}$ 
	 and used as the initial guess to the V-cycle on $\Omega^{4h}$. 
	This ``prolongation $+$ V-cycle" process is repeated recursively: 
	 the prolongation is represented by an upward dashed line 
	 and the V-cycles are represented by the solid lines.}
	\label{fig:FMG-cycle}
\end{figure}

\subsection{Multigrid components}

First we discuss the smoother used in Algorithm~\ref{alg:VCycle}.
To avoid cluttering,
we temporarily omit the superscript $^{(m)}$ indicating the grid level, 
and use the prime variables to denote the quantities after one iteration of the smoother. 
Then, to ensure convergence,
we have made the following adjustments.
\begin{enumerate}[label={(SMO-\arabic*)}, noitemsep, leftmargin = *]
\item The $\omega$-weighted Jacobi iteration 
\begin{equation}
\hat{u}'_1 = D^{-1} \left[ (1-\omega) D + \omega O \right] \hat{u}_1 
+ \omega D^{-1} O \left( \hat{r}_1 - L_{12} \hat{u}_2 \right)
\end{equation}
is applied in a pointwise fashion to $\hat{u}_1$, 
where $D$ is the diagonal part of $L_{11}$
and $O = D - L_{11}$ is the off-diagonal part; 
\label{enum:SMO1}
\item A block update 
\begin{equation}
L_{22}\hat{u}'_2 = \hat{r}_2 - L_{21} \hat{u}'_1
\label{eq:updateByBlock}
\end{equation}
is applied to obtain $\hat{u}'_2$. 
\label{enum:SMO2}
\end{enumerate}

Several comments are in order. 
First, 
it is favorable to pre-compute the LU factorization of the sub-block $L_{22}$,
allowing for solving the linear system (\ref{eq:updateByBlock}) with 
multiple right-hand-sides with minimal cost. 
Second, we have used the updated value $\hat{u}_1'$ 
instead of $\hat{u}_1$ in (\ref{eq:updateByBlock}). 
Hence, the overall iteration is more like Gauss-Siedel in a blockwise fashion.
After two iterations of the smoother, we have
\begin{subequations}
\begin{align}
\label{eq:errorOfE1}
\hat{e}'_1 &= D^{-1} \left[ (1-\omega) D + \omega O \right] \hat{e}_1, \\
\label{eq:errorOfE2}
\hat{e}'_2 &= \mathbf{0},
\end{align}
\label{eq:errorOfSmoother}
\end{subequations}
where $\hat{e} = [\hat{e}_1^T ~ \hat{e}_2^T ]^T$ and its prime version
are the solution errors in (\ref{eq:PoissonSplitDiscretization})
before and after the iteration, respectively.
From (\ref{eq:errorOfE1}) we see that
the error on the interior cells is essentially damped by an $\omega$-weighted Jacobi iteration,
exactly what we would expect from an effective smoother. 

By virtue of (\ref{eq:errorOfE2}), 
not only the error but also the residual 
corresponding to the $\hat{u}_2$ part are identically zero
after two or more iterations of the smoother. 
Consequently, 
grid-transfer operators 
that carefully handles the irregular cells near the domain boundary
are not mandatory. 
In practice, 
we apply the volume weighted restriction
\begin{equation}
\brk{u}^{(m+1)}_{\lfloor \bmi / 2 \rfloor} = 
2^{-\Dim} \sum_{\bmj \in \left\{ 0, 1 \right\}^\Dim} \brk{u}^{(m)}_{\bmi + \bmj}
\end{equation}
and the patchwise-constant interpolation
\begin{equation}
\brk{u}^{(m)}_{\bmi} = \brk{u}^{(m+1)}_{\lfloor \bmi / 2 \rfloor}
\end{equation}
for regular cells, 
while leaving the correction and the residual for partial cells to zero. 

On the coarsest level
we invoke the LU solver on $L^{(M)} \hat{u}^{(M)} = \hat{r}^{(M)}$
where the LU factorization of $L^{(M)}$ is computed beforehand. 

\subsection{Optimal complexity of LU factorization for $L_{22}$}
\label{sec:order_L22}

Since we frequently need to solve the linear system (\ref{eq:updateByBlock}), 
it is essential to find efficient methods.
Traditional LU factorization has a complexity of $\bigO(N^3)$, 
where $N$ is the number of unknowns. 
In this section, 
we present an optimal LU factorization for $L_{22}$ 
achieved by reordering the unknowns, 
which has a complexity of only $\bigO(N)$.
To be specific, 
let $\calI_{\mathrm{bdry}}$ be the collection of all interface cells, i.e.
\[
\calI_{\mathrm{bdry}} \coloneqq \left\{ \bmi \in \bbZ^2 : \calC_{\bmi} \neq \emptyset, \rmC_{\bmi} \right\}. 
\]
Let $\gamma : [0, 1] \to \mathbb{R}^2$ 
 be a parameterization of the Jordan curve $\partial\Omega$ with $\gamma(0) = \gamma(1)$.
For each $\ibold\in\mathcal{I}_{\mathrm{bdry}}$, 
define $s(\ibold)\in [0,1)$ such that
\begin{equation}
\mathrm{dist}(\partial\Omega,R_\ibold) = \Vert R_\ibold - \gamma(s(\ibold)) \Vert,
\end{equation}
where $R_\ibold$ is the cell center of $\mathbf{C}_\ibold$. 
Next we define the linear order $``\leq"$ on $\mathcal{I}_\mathrm{bdry}$
by requiring
\begin{equation}
\label{eq:linearOrder}
\ibold \leq \jbold \quad \mathrm{iff} \quad s(\ibold)\leq s(\jbold).
\end{equation}
We compute the ordering for the irregular cells 
according to Algorithm \ref{alg:order}; 
see Figure \ref{fig:order} for an illustration.
To measure the effectiveness of this ordering, 
we consider a decomposition of the reordered matrix $L_{22}$, 
\begin{equation}
\label{eq:L_22_decomposition}
L_{22} = 
L_{22,c}
+ 
\begin{bmatrix}
O & L_{22,u} \\
O & O
\end{bmatrix} 
+ 
\begin{bmatrix}
O & O \\
L_{22,l} & O
\end{bmatrix},
\end{equation}
where $L_{22,c}$ is a banded matrix,
and $L_{22,u}, L_{22,l}$ are square sub-matrices with small dimensions. 
The \emph{cyclic bandwidth} of $L_{22}$,
defined as the number $\lambda(L_{22,c}, L_{22,u}, L_{22,l})$, 
which is the maximum of the bandwidth of $L_{22,c}$
and the dimensions of $L_{22,u}$ and $L_{22,l}$, 
is associated with each decomposition in the form of 
(\ref{eq:L_22_decomposition}). 
In practical numerical examples, 
the cyclic bandwidth of $L_{22}$ is found to be independent of the grid size 
and is no larger than $20$,
even for very complex geometries. 
Consequently, the complexity of the LU factorization of the matrix $L_{22}$ 
can be kept linear with its dimension (see Section \ref{sec:efficiency}), 
provided no pivoting is performed. 
For details on the LU factorization of the banded matrix, see~\cite{Golub}.

\begin{algorithm}
	\caption{An order over the irregular cells.}  
	\label{alg:order}  
	\begin{algorithmic}[1]
	\Require { The collection of all interface cells $\mathcal{I}_{\mathrm{bdry}}$ 
	ordered by $``\leq"$ defined in \eqref{eq:linearOrder}.}
	\Ensure{ 
	An order over the irregular cells, represented by
	a non-negative integer $P(\ibold)$ for each
	irregular cell $\ibold$}.

	\State Initialize $P(\ibold) = -1$ for each irregular cell $\ibold$
	
	\State $k \leftarrow 0$
	
	\For{$\ibold\in (\mathcal{I}_{\mathrm{bdry}}, \leq)$}
		\For{ all $\jbold\in \bbZ^2$ with $\left\Vert \ibold-\jbold \right\Vert_{\infty} \le 2$}
			\If{ cell $\jbold$ is irregular $\mathrm{and}$ $P(\jbold)=-1$}
				\State $P(\jbold) = k$
				\State $k\leftarrow k+1$
			\EndIf
		\EndFor
	\EndFor
	\end{algorithmic}
\end{algorithm}

\begin{figure}[htbp]
\centering
\includegraphics[width=0.5\textwidth]{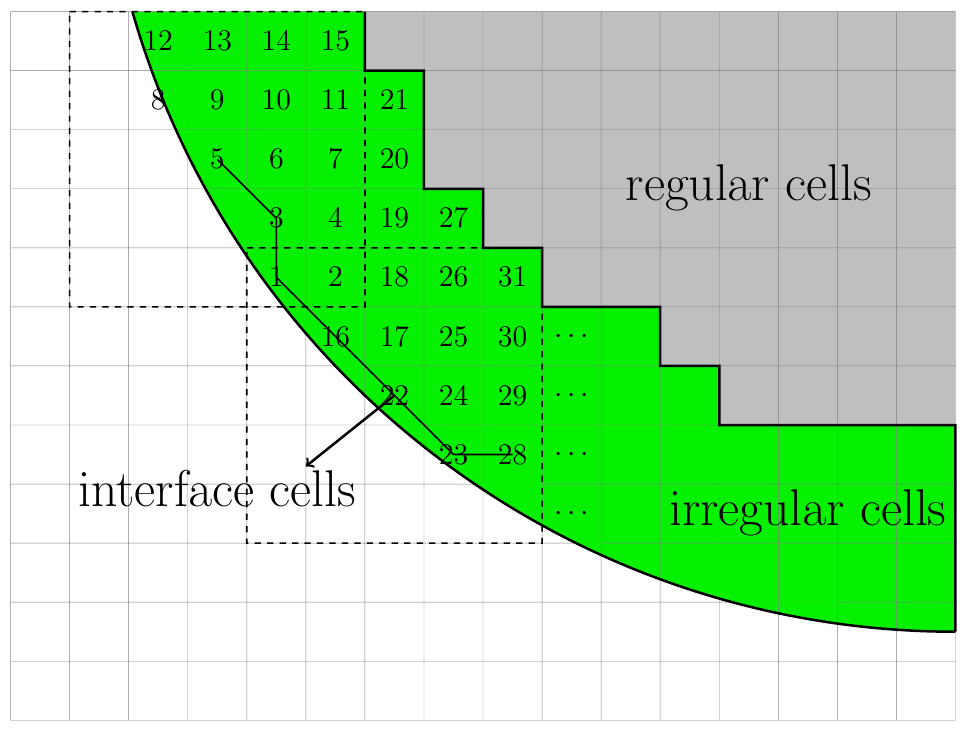}
\caption{
An illustration for the linear order output by Algorithm \ref{alg:order}.
The regular cells are shaded light gray, while the irregular cells are shaded yellow. 
}
\label{fig:order} 
\end{figure}



\subsection{Complexity}
\label{sec:complexity}

Let $h = h^{(0)}$ be the step size of the finest grid. 
We assume the sub-vector $\hat{u}_1$ in (\ref{eq:PoissonSplitDiscretization})
has dimension $\bigO(h^{-2})$
and the sub-vector $\hat{u}_2$ has dimension $\bigO(h^{-1})$. 

\begin{enumerate}[label={(\alph*)}, leftmargin=*]
\item \textbf{Setup}. 
Consider the setup time on the finest level.
During the construction of the discrete Laplacian and cross operators, 
the finite-volume PLG discretization is invoked to obtain the $\left[ L_{21} \ L_{22} \right]$ blocks. 
This requires $\bigO(h^{-1})$ time since the PLG discretization is $\bigO(1)$ for each cell
and these blocks are sparse with $\bigO(h^{-1})$ rows. 
{
We then compute the sparse LU factorization of $L_{22}$, 
which takes $\bigO(h^{-1})$ time thanks to its sparsity and small bandwidth; 
see also the numerical evidences in Section \ref{sec:efficiency}. 
}
For the complete hierarchy of grids, 
the complexity is bounded by a constant multiple of that on the finest level, 
so the total setup time is $\bigO(h^{-1})$.

\item \textbf{Solution}. 
We first bound the complexity of a single V-cycle (Algorithm~\ref{alg:VCycle}) 
and a single FMG cycle (Figure~\ref{fig:FMG-cycle}).
The bottom solver requires $\bigO(1)$ time since it operates on the coarsest level only.
On the finest level, 
the pointwise Jacobi iteration on $\hat{u}_1$ ~\ref{enum:SMO1} requires $\bigO(h^{-2})$ time.
The blockwise update on $\hat{u}_2$~\ref{enum:SMO2}
requires $\bigO(h^{-1})$ time, 
provided the LU factorization of $L_{22}$ is available. 
Each of the restriction and the prolongation requires $\bigO(h^{-2})$. 
Since the complexity of the complete hierarchy
is bounded by a constant multiple of that on the finest level, 
we know that both a single V-cycle and a single FMG cycle 
have a complexity of $\bigO(h^{-2})$. 

To bound the complexity of the overall solution procedure,
we still need to estimate the number of V-cycles and FMG cycles 
needed to reduce the initial residual to a given fraction. 
Considering the recursive nature of V-cycle
and the smoothing property (\ref{eq:errorOfSmoother}),
we expect that error modes of all frequency 
can be effectively damped by the multigrid algorithm,
similar to their behavior on rectangular domains, 
with V-cycles having a convergence factor 
$\gamma$ that is independent of $h$.
Since V-cycles must reduce the algebraic error from $\bigO(1)$ to $\bigO(h^4)$,
the number $\mu$ of V-cycles required must satisfy 
$\gamma^\mu = \bigO(h^4)$ or $\mu = \bigO(h^{-1})$.
Because the cost of a single V-cycle is $\bigO(h^{-2})$,
the cost of V-cycles is $\bigO(h^{-2}\log (h^{-1}))$.
The FMG cycle costs a little more per cycle than the V-cycle. 
However, a properly designed FMG cycle can be much more effective overall 
because it supplies a very good initial guess to the final V-cycles on the finest level.
This result in only $\bigO(1)$ V-cycles are needed on the finest grid
(see~\cite{Brandt:Multigrid_Techniques} for a more detailed discussion).
This means that the total complexity of FMG cycles is $\bigO(h^{-2})$, which is optimal.
This is also confirmed by numerical examples in Section~\ref{sec:efficiency}.
\end{enumerate}

We point out that the solution time of $\bigO(h^{-2})$ is the best possible bound 
we can achieve in irregular domains, 
since solving an elliptic equation in a rectangular grid with $\bigO(h^{-2})$ cells
using the geometric multigrid method has exactly $\bigO(h^{-2})$ complexity.
Detailed numerical results on the efficiency of the multigrid algorithm
are reported in Section~\ref{sec:efficiency}.

\section{Numerical tests}
\label{sec:Tests}

In this section we demonstrate the accuracy and efficiency of our method
by various test problems in irregular domains. 
Define the $L^p$ norms
\begin{equation}
\Vert u \Vert_p = \left\{
\begin{aligned}
& \left( \frac {1} {\Vert \Omega \Vert} \sum \left\Vert \calC_\bmi \right\Vert \cdot
\left\vert \brk{u}_{\bmi} \right\vert^p \right)^{1/p}  \quad &&p = 1, 2, \\
& \max \ \left\vert \brk{u}_{\bmi} \right\vert \quad &&p = \infty,
\end{aligned}
\right.
\end{equation}
where the summation and the maximum are taken over
the non-empty cells in the computational domain.

\subsection{Convergence tests}
\label{sec:convergence}

\textbf{Problem 1.} 
Consider a problem~\cite[Problem 3]{Johansen1998} of Poisson's equation
on the irregular domain $\Omega = R \cap \Omega_1$, 
where $R$ is the unit box centered at the origin and 
\begin{equation}
\Omega_1 = \left\{ \left( r, \theta \right) : r \ge 0.25 + 0.05 \cos 6 \theta \right\}. 
\end{equation}
Here $\left( r, \theta \right)$ are the polar coordinates satisfying
$(x_1, x_2) = (r \cos \theta, r \sin \theta)$.
A Dirichlet condition is imposed on the rectilinear sides $\partial R$
while a Neumann condition is imposed on the irregular boundary $\partial \Omega_1$. 
Both the right-hand-side and the boundary conditions
are derived from the exact solution
\begin{equation}
u(r, \theta) = r^4 \cos 3 \theta. 
\end{equation}

The numerical solution and the solution error on the grid of $h=1/80$
are shown in Figure~\ref{fig:SolOfProblem1} and~\ref{fig:ErrOfProblem1}, respectively.
The error appears oscillating near the irregular boundary
due to the rapidly varying truncation errors 
induced by the PLG discretization. 
However, it does not affect the fourth-order convergence of the numerical solution 
as confirmed by the grid-refinement tests. 

The truncation errors and the solution errors
are listed in Table~\ref{tab:truncationErrorsOfProblem1} and~\ref{tab:solutionErrorsOfProblem1}.
The convergence rates of the truncation errors 
are asymptotically close to 3, 4 and 3.5
in $L^\infty$ norm, $L^1$ norm and $L^2$ norm, respectively. 
This suggests that the PLG approximation is third-order accurate
on the boundary cells of codimension one 
and fourth-order accurate on the interior cells. 
The convergence rates of the solution errors 
are close to 4 in all norms as expected. 
Also included in the tables
are the comparisons with the second-order EB method
by Johansen and Colella~\cite{Johansen1998}.
Our method is much more accurate than the second-order EB method
in terms of solution errors. 
In particular, the $L^\infty$ norm of the error on the grid of $h=1/80$ in our method 
is smaller by a factor of 40 than that on the grid of $h=1/320$ in the EB method.
On the grid of $h=1/320$, the $L^\infty$ norm of the error in our method 
is about $1/8000$ of that in the second-order EB method. 

\begin{table}[htb]
\centering
\begin{tabular}{cccccccc}
\hline
\multicolumn{8}{c}{Truncation errors of the EB method by Johansen and Colella~\cite{Johansen1998}} \\
\hline
       $h$ &       1/40 &       rate &       1/80 &       rate &      1/160 &       rate &      1/320 \\
\hline
$L^\infty$ &   1.66e$-$03 &       2.0 &   4.15e$-$04 &       2.0 &   1.04e$-$04 &       2.0 &   2.59e$-$05 \\
\hline
\multicolumn{8}{c}{Truncation errors of the current method} \\
\hline
       $h$ &       1/40 &       rate &       1/80 &       rate &      1/160 &       rate &      1/320 \\
\hline       
$L^\infty$ &   2.94e-04 &       0.78 &   1.71e-04 &       2.83 &   2.41e-05 &       2.95 &   3.13e-06 \\
     $L^1$ &   1.03e-05 &       3.74 &   7.70e-07 &       4.16 &   4.30e-08 &       3.87 &   2.94e-09 \\
     $L^2$ &   3.30e-05 &       3.00 &   4.13e-06 &       3.65 &   3.29e-07 &       3.45 &   3.01e-08 \\
\hline     
\end{tabular}
\caption{Truncation errors of Problem 1, 
with Dirichlet condition on the rectilinear sides
and Neumann condition on the irregular boundary.
Comparisons with a second-order EB method~\cite{Johansen1998} are shown.
}
\label{tab:truncationErrorsOfProblem1}
\end{table}%

\begin{table}[htb]
\centering
\begin{tabular}{cccccccc}
\hline
\multicolumn{8}{c}{Solution errors of the EB method by Johansen and Colella~\cite{Johansen1998}} \\
\hline
       $h$ &       1/40 &       rate &       1/80 &       rate &      1/160 &       rate &      1/320 \\
\hline       
$L^\infty$ &   4.78e$-$05 &       1.85 &   1.33e$-$05 &       1.98 &   3.37e$-$06 &       1.95 &   8.72e$-$07 \\
\hline
\multicolumn{8}{c}{Solution errors of the current method} \\
\hline
       $h$ &       1/40 &       rate &       1/80 &       rate &      1/160 &       rate &      1/320 \\
\hline       
$L^\infty$ &   3.68e-07 &       4.08 &   2.17e-08 &       3.70 &   1.67e-09 &       3.92 &   1.10e-10 \\
     $L^1$ &   3.77e-08 &       3.85 &   2.62e-09 &       4.41 &   1.23e-10 &       3.79 &   8.86e-12 \\
     $L^2$ &   6.24e-08 &       4.12 &   3.59e-09 &       4.19 &   1.97e-10 &       3.81 &   1.41e-11 \\
\hline     
\end{tabular}
\caption{Solution errors of Problem 1, 
with Dirichlet condition on the rectilinear sides
and Neumann condition on the irregular boundary.
Comparisons with a second-order EB method~\cite{Johansen1998} are shown.
}
\label{tab:solutionErrorsOfProblem1}
\end{table}%

\begin{figure}[htbp]
    \centering
    \begin{subfigure}[b]{.6\textwidth}
        \centering
        \includegraphics[width=1.0\linewidth]{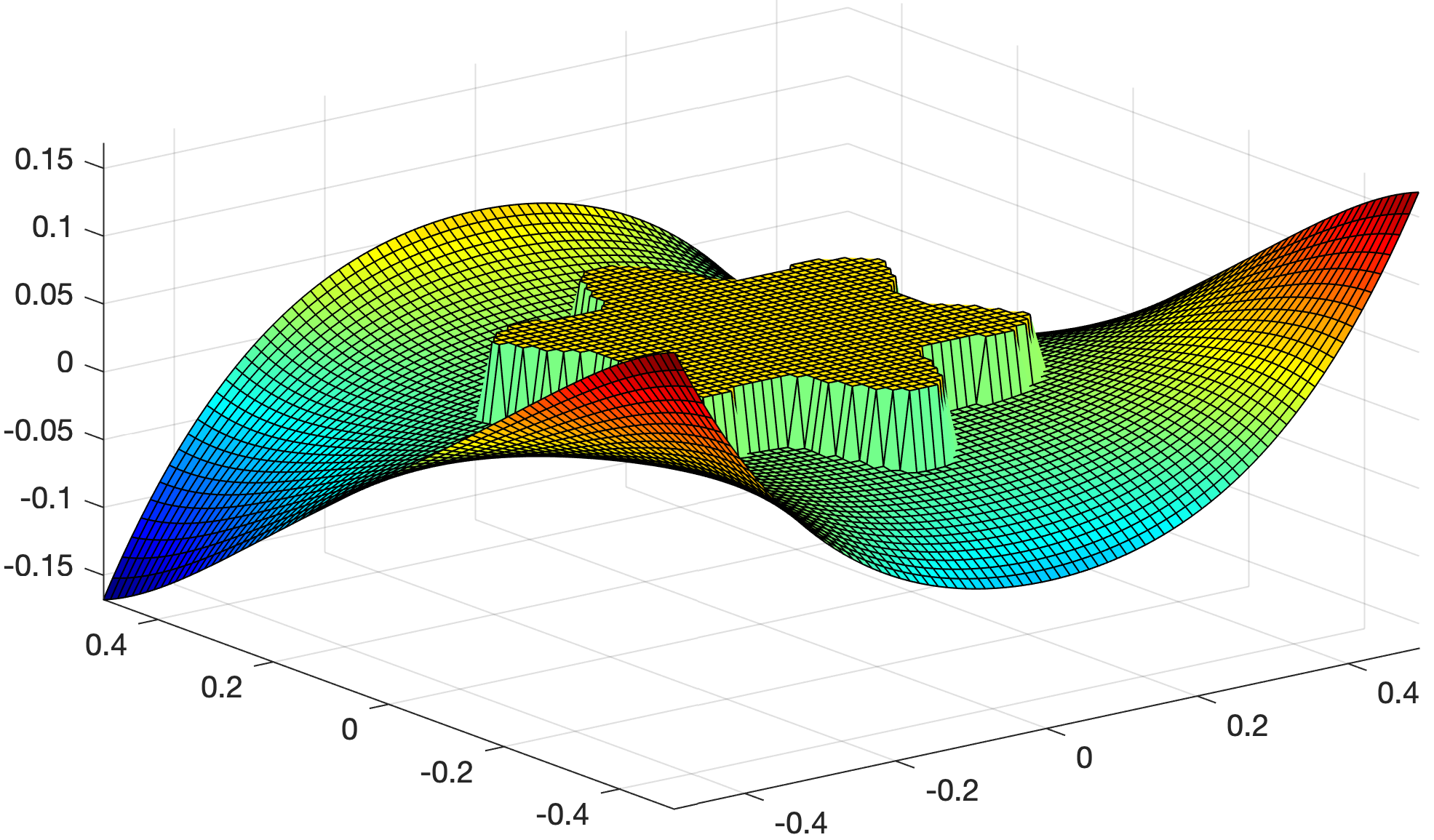}
        \caption{Numerical solution}
        \label{fig:SolOfProblem1}
    \end{subfigure}
    \begin{subfigure}[b]{.6\textwidth}
        \centering
        \includegraphics[width=1.0\linewidth]{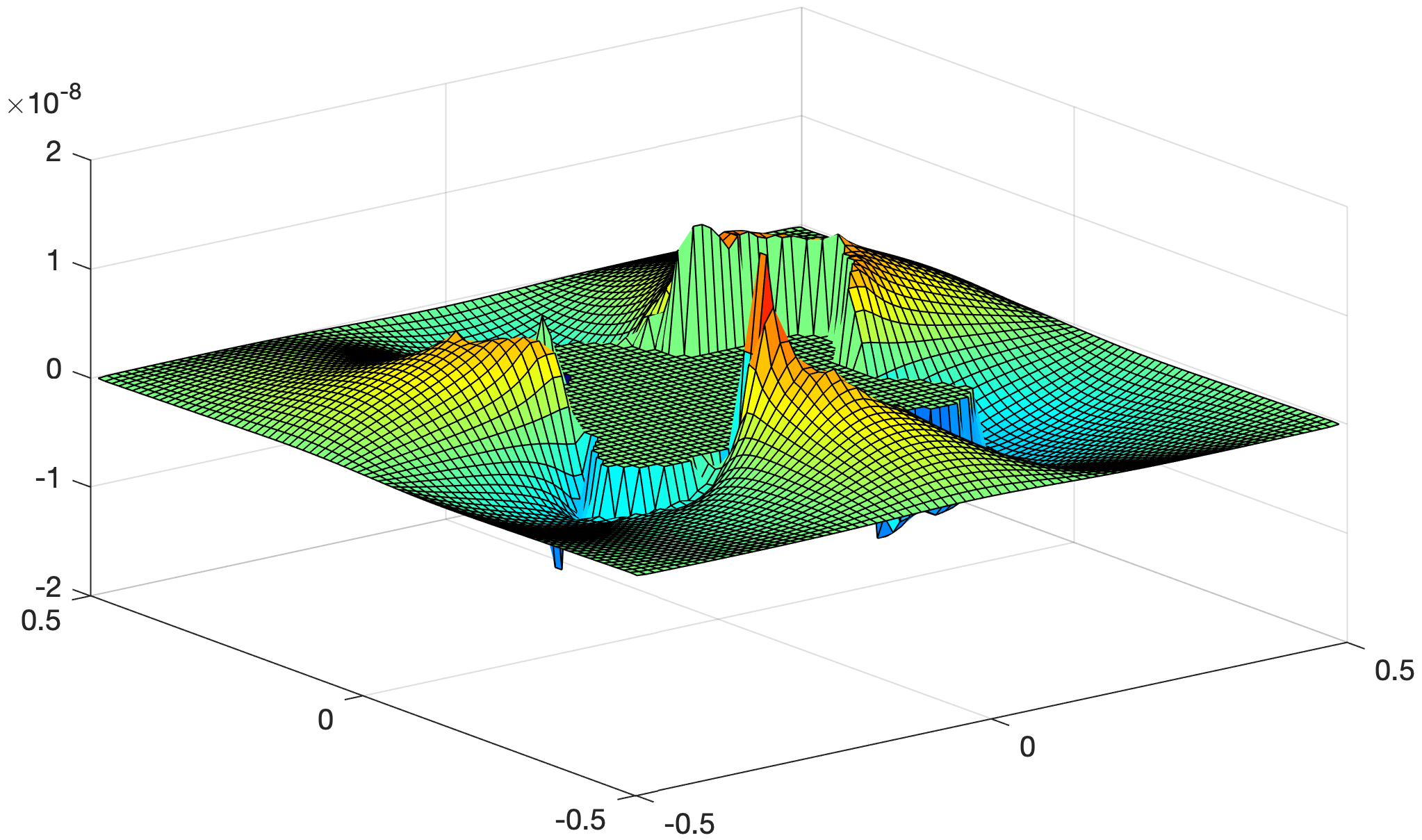}
        \caption{Solution error}
        \label{fig:ErrOfProblem1}
    \end{subfigure}
    \caption{Solving Problem 1 on the grid of $h=1/80$. }
\end{figure}

\textbf{Problem 2.} 
Consider a problem from~\cite[\S 5.1]{Devendran2017}
where Poisson's equation is solved on the domain $\Omega = R \cap \Omega_2$, 
$R$ is the unit box $[0, 1]^2$
and $\Omega_2$ is the exterior of the ellipse defined by
\begin{equation}
\sum_{i=1}^2 \left( \frac {x_i - c_i} {a_i} \right)^2 = 1. 
\end{equation}
The parameters are $(c_1, c_2) = (1/2, 1/2)$ and $(a_1, a_2) = (1/8, 1/4)$. 
A Dirichlet condition is imposed on the rectilinear sides $\partial R$, 
and either a Dirichlet or a Neumann condition 
is imposed on the irregular boundary $\partial \Omega_2$. 
The right-hand-side and the boundary conditions
are derived from the exact solution
\begin{equation}
u = \Pi_{i=1}^2 \sin(\pi x_i). 
\end{equation}

The solution errors with Neumann condition and Dirichlet condition 
on the grid of $h=1/256$
are shown in Figure~\ref{fig:errorNeumannOfProblem2}
and~\ref{fig:errorDirichletOfProblem2}, respectively.
The errors of both cases appear smooth over the entire domain, 
and approach zero on the Dirichlet boundaries. 
In the Neumann case, 
the maximum of error appears on the ellipse boundary;
while in the Dirichlet case, 
the maximum of error appears in the interior of the domain.

The solution errors 
are listed in Table~\ref{tab:errorsOfEllipseDN} and~\ref{tab:errorsOfEllipseDD}.
With either boundary condition, 
the solutions demonstrate a fourth-order convergence in all norms in the grid-refinement tests.
Also included in the tables are the comparisons with 
a recently developed fourth-order EB method~\cite{Devendran2017}.
It is observed that the solution errors of our method 
are smaller than those in~\cite{Devendran2017} in all norms.
For example, on the grid of $h=1/512$,
the solution error in our method 
is about one half of that in~\cite{Devendran2017} in the case of Neumann condition,
and about one third in the case of Dirichlet condition. 

\begin{table}[htb]
\centering
\begin{tabular}{cccccccc}
\hline
\multicolumn{8}{c}{A fourth-order EB method by Devendran et al.~\cite{Devendran2017}} \\
\hline
       $h$ &       1/64 &       rate &      1/128 &       rate &      1/256 &       rate &      1/512 \\
\hline       
$L^\infty$ &   1.83e-07 &      3.96 &   1.18e-08 &       3.98 &   7.43e-10 &      3.88 &   5.05e-11 \\
     $L^1$ &   6.90e-08 &       3.95 &   4.46e-09 &       3.97 &   2.83e-10 &      3.86 &   1.94e-11 \\
     $L^2$ &   8.67e-08 &       3.96 &   5.56e-09 &       3.98 &   3.51e-10 &      3.87 &   2.40e-11 \\
\hline
\multicolumn{8}{c}{Current method} \\
\hline
       $h$ &       1/64 &       rate &      1/128 &       rate &      1/256 &       rate &      1/512 \\
\hline       
$L^\infty$ &   9.17e-08 &       3.87 &   6.29e-09 &       3.87 &   4.31e-10 &       4.04 &   2.62e-11 \\
     $L^1$ &   1.99e-08 &       3.49 &   1.76e-09 &       3.67 &   1.38e-10 &       4.12 &   7.99e-12 \\
     $L^2$ &   2.66e-08 &       3.55 &   2.27e-09 &       3.72 &   1.73e-10 &       4.12 &   9.96e-12 \\
\hline     
\end{tabular}
\caption{Solution errors of Problem 2, 
with Dirichlet condition on the rectilinear sides
and Neumann condition on the irregular boundary.
Comparisons with a fourth-order EB method~\cite{Devendran2017} are shown. }
\label{tab:errorsOfEllipseDN}
\end{table}%

\begin{table}[htb]
\centering
\begin{tabular}{cccccccc}
\hline
\multicolumn{8}{c}{A fourth-order EB method by Devendran et al.~\cite{Devendran2017}} \\
\hline
       $h$ &       1/64 &       rate &      1/128 &       rate &      1/256 &       rate &      1/512 \\
\hline       
$L^\infty$ &   4.15e-08 &      4.04 &   2.53e-09 &       4.01 &   1.56e-10 &      3.84 &   1.09e-11 \\
     $L^1$ &   1.91e-08 &       4.02 &   1.17e-09 &       3.99 &   7.37e-11 &      3.82 &   5.20e-11 \\
     $L^2$ &   2.30e-08 &       4.04 &   1.40e-09 &       4.01 &   8.72e-11 &      3.83 &   6.11e-12 \\
\hline
\multicolumn{8}{c}{Current method} \\
\hline
       $h$ &       1/64 &       rate &      1/128 &       rate &      1/256 &       rate &      1/512 \\
\hline       
$L^\infty$ &   4.96e-08 &       5.79 &   8.95e-10 &       4.12 &   5.14e-11 &       3.94 &   3.35e-12 \\
     $L^1$ &   8.88e-09 &       4.34 &   4.39e-10 &       4.11 &   2.55e-11 &       4.16 &   1.42e-12 \\
     $L^2$ &   1.03e-08 &       4.35 &   5.06e-10 &       4.10 &   2.95e-11 &       4.13 &   1.68e-12 \\
\hline     
\end{tabular}
\caption{Solution errors of Problem 2, 
with Dirichlet condition on the domain boundaries.
Comparisons with a fourth-order EB method~\cite{Devendran2017} are shown. }
\label{tab:errorsOfEllipseDD}
\end{table}%

\begin{figure}[htbp]
\centering
\begin{subfigure}[b]{.45\linewidth}
  \centering
  \includegraphics[height=1.0\linewidth]{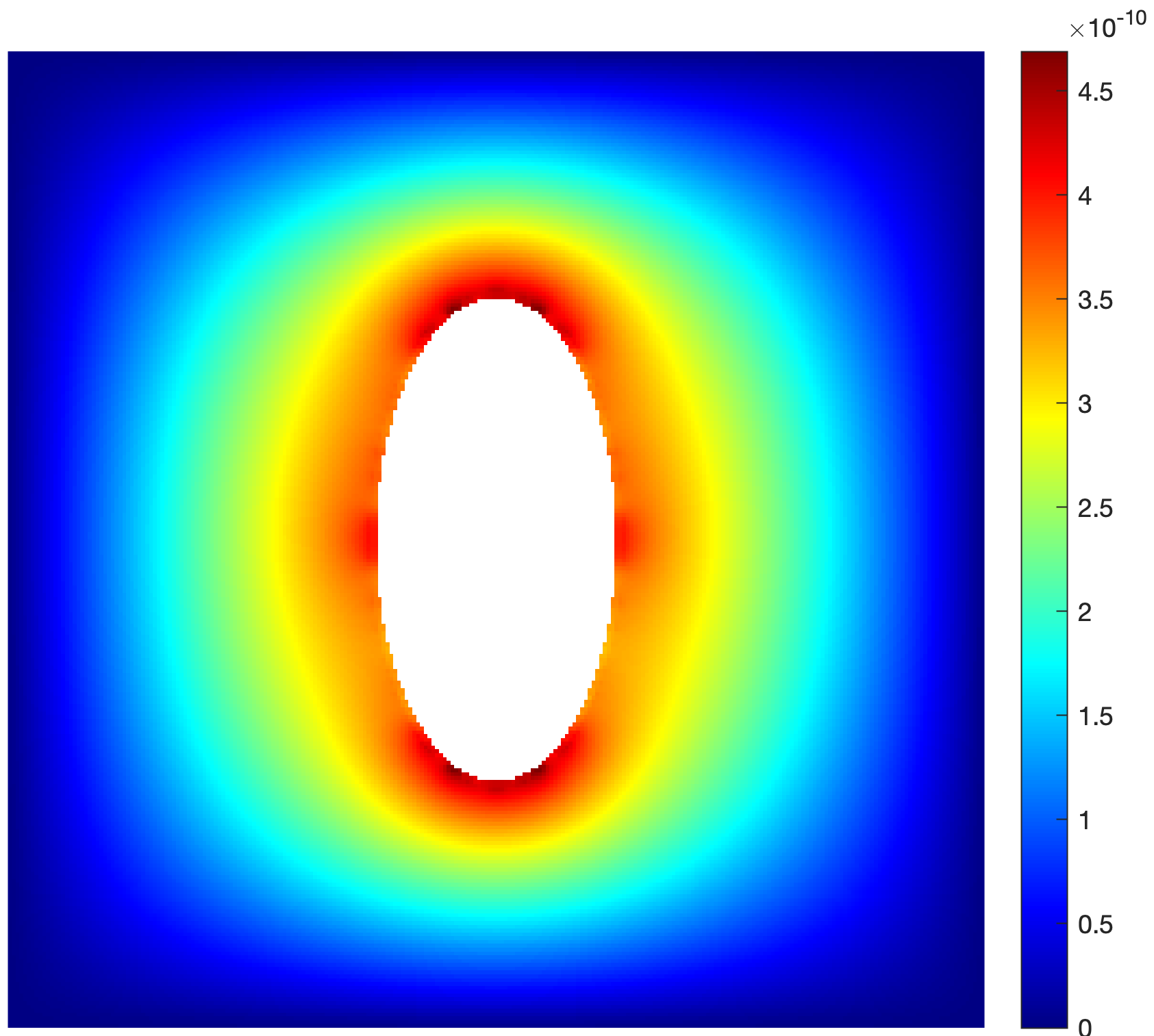}
  \caption{Neumann condition on the irregular boundary. }
  \label{fig:errorNeumannOfProblem2} 
\end{subfigure}
\hspace{20pt}
\begin{subfigure}[b]{.45\linewidth}
  \centering
  \includegraphics[height=1.0\linewidth]{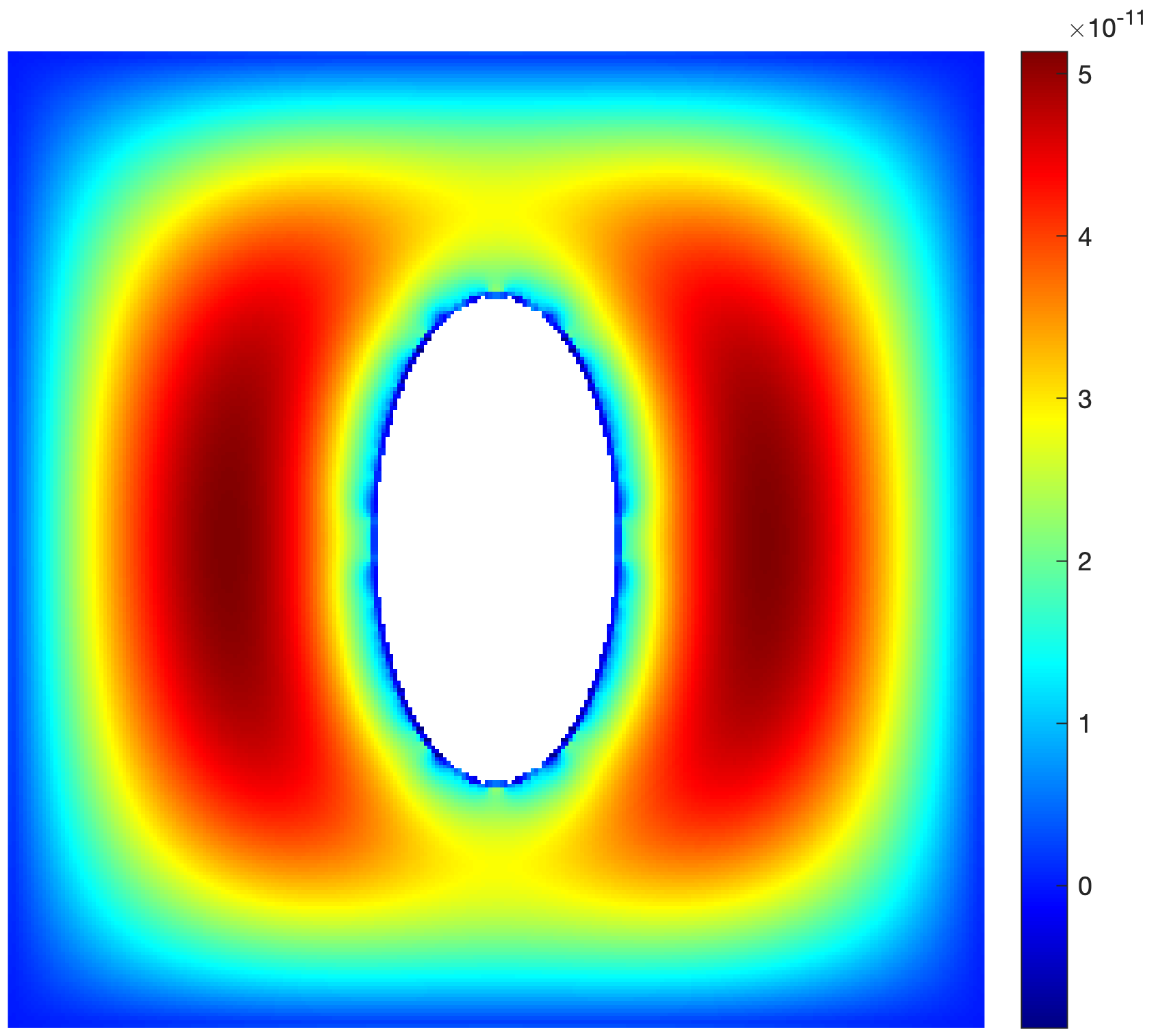}
  \caption{Dirichlet condition on the irregular boundary. }
  \label{fig:errorDirichletOfProblem2}
\end{subfigure}
\caption{Solution errors of Problem 2 with $h=1/256$. 
The Dirichlet condition is applied on the rectilinear sides. }
\end{figure}

\textbf{Problem 3.} 

Consider solving the constant-coefficient elliptic equation (\ref{eq:ccEllipticEq})
with different combinations of $(a,b,c)$ and $\Omega$:
\begin{enumerate}[label={(\roman*)}, leftmargin=*]
\item $\Omega$ is the unit box $[0,1]^2$ and $(a,b,c) = (1, 0, 2)$.
\label{testCase:unrotated}
\item $\Omega$ is the unit box rotated counterclockwise by $\pi/6$. 
In this domain, a system equivalent to case~\ref{testCase:unrotated} would have
$\left(a, b, c\right) = \left(5/4, -\sqrt{3}/2, 7/4\right)$.
\label{testCase:rotated}
\label{testCase:rotatedRoundCorners}
\end{enumerate}

For case~\ref{testCase:unrotated},
the regular geometry and the absence of cross-derivative terms
permits the usage of one-dimensional FD formula. 
In contrast, 
for case~\ref{testCase:rotated} 
we invoke the PLG discretization near the irregular boundary 
to approximate the elliptic operator (\ref{eq:theEllipticOperator})
as a whole.
For case~\ref{testCase:unrotated},
the exact solution is given by
\begin{equation}
u(x_1, x_2) = \sin(4 x_1) \cos(3 x_2). 
\end{equation}
For the other case,
the exact solution is rotated accordingly.
Both test cases assume Dirichlet boundary condition.

The solution errors of both test cases are listed in Table~\ref{tab:errorsOfBox}.
The convergence rates in both cases are close to 4 in all norms,
again confirming the accuracy of our method. 
A comparison between case~\ref{testCase:unrotated} and case~\ref{testCase:rotated}
shows that the $L^\infty$ norm of the error on the grid of $h=1/512$ in the latter case
is roughly four times larger than that in the former case. 

\begin{table}[htb]
\centering
\begin{tabular}{cccccccc}
\hline
\multicolumn{8}{c}{Case~\ref{testCase:unrotated}} \\
\hline
       $h$ &       1/64 &       rate &      1/128 &       rate &      1/256 &       rate &      1/512 \\
\hline
$L^\infty$ &   3.68e-08 &       4.00 &   2.30e-09 &       4.00 &   1.44e-10 &       3.98 &   9.10e-12 \\
     $L^1$ &   1.13e-08 &       4.01 &   7.00e-10 &       4.01 &   4.35e-11 &       3.90 &   2.91e-12 \\
     $L^2$ &   1.50e-08 &       4.01 &   9.32e-10 &       4.00 &   5.81e-11 &       3.96 &   3.73e-12 \\
\hline     
\multicolumn{8}{c}{Case~\ref{testCase:rotated}} \\
\hline
       $h$ &       1/64 &       rate &      1/128 &       rate &      1/256 &       rate &      1/512 \\
\hline       
$L^\infty$ &   1.57e-07 &       4.16 &   8.75e-09 &       3.93 &   5.75e-10 &       3.95 &   3.71e-11 \\
     $L^1$ &   4.92e-08 &       4.08 &   2.92e-09 &       3.92 &   1.92e-10 &       3.91 &   1.28e-11 \\
     $L^2$ &   6.15e-08 &       4.07 &   3.67e-09 &       3.90 &   2.47e-10 &       3.91 &   1.64e-11 \\
\hline
\end{tabular}
\caption{Solution errors of Problem 3, 
with Dirichlet condition on the domain boundaries. }
\label{tab:errorsOfBox}
\end{table}%


\subsection{Efficiency}
\label{sec:efficiency}


Here we show the reductions of relative residuals and relative errors
during the solution of Problem 2, 
and compare with their counterparts on the rectangular domain $R$. 
For both tests we use the same exact solution and the same multigrid parameters.
In Figure~\ref{fig:reduction},
the change of relative residual in the irregular domain $\Omega$ 
(in the rectangular domain $R$, respectively)
is plotted against the number of FMG cycle iterations
in solid lines (in dashed lines, respectively).
In both domains, 
the relative residuals are reduced by a factor of 11.3 per iteration
before their stagnation at the 9th iteration.
The relative errors, 
also shown in Figure~\ref{fig:reduction},
are reduced by a similar factor to that of the relative residuals. 
They stop to descend at the 7th iteration, 
implying that the algebraic accuracy of the solutions is reached. 
To summarize, the multigrid algorithm on irregular domains
behave very much the same as it does on rectangular domains. 

\begin{figure}[htbp]
   \centering
   \includegraphics[width=0.6\linewidth]{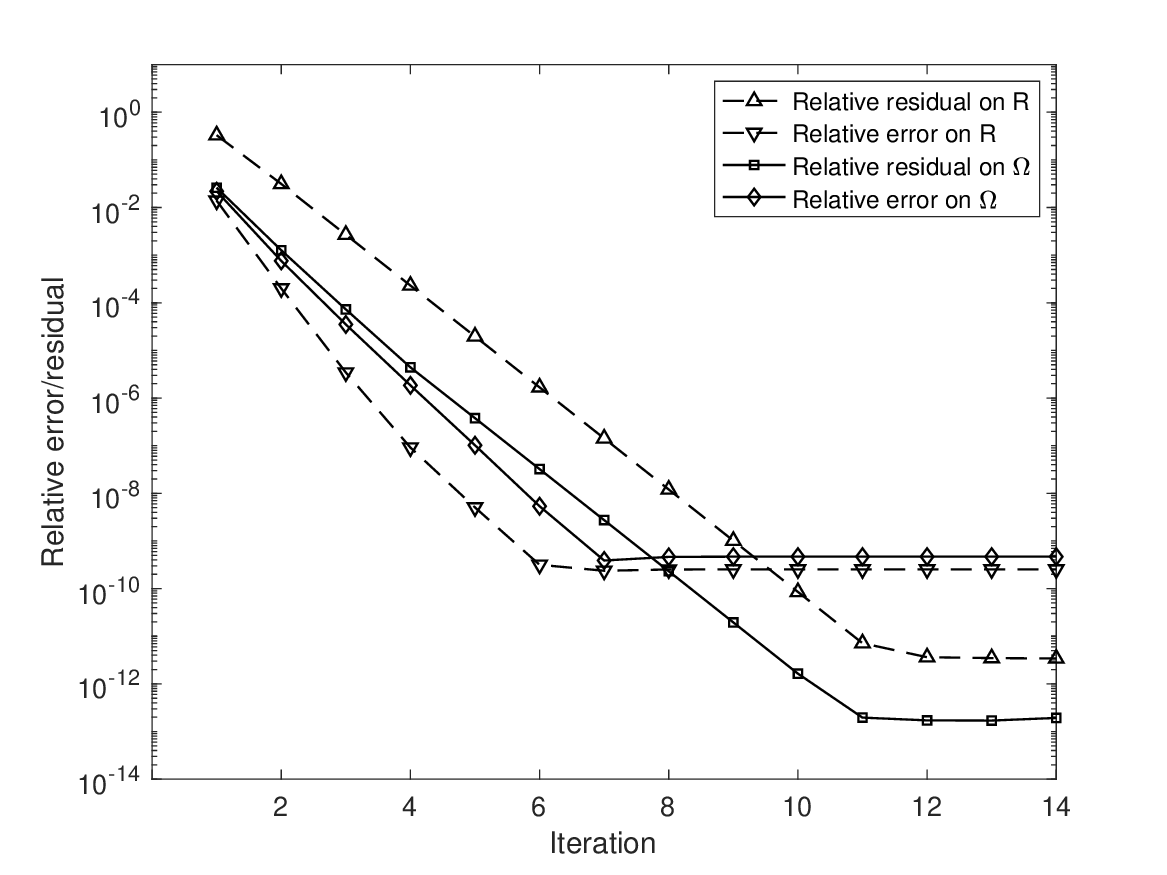}
   \caption{The reduction of relative residuals
   and relative errors during the solution of Problem 2
   in the irregular domain $\Omega$ and in the rectangular domain $R$. 
   The grid is $h=1/128$. 
   The initial guess is the zero function. 
   The multigrid parameters are $\omega = 0.5$ and $\nu_1 = \nu_2 = 3$.
   The ordinates are the relative residuals (relative errors) in $L^\infty$ norm, 
   the abscissa is the number of FMG cycle iterations. 
   The solid lines represent the results in the irregular domain
   and the dashed lines represent the results in the rectangular domain.
   }
   \label{fig:reduction}
\end{figure}

In Table~\ref{tab:timing_1} and \ref{tab:timing_2}, 
we report the time consumption of each component of the solver. 
The column ``cut-cell generation'' refers to the generation of cut cells
and the cell merging process; see Section \ref{sec:grid}. 
The column ``factorization of $L_{22}$'' is self-evident. 
The column ``poised lattice generation'' comprises 
the execution of the PLG algorithm (Definition \ref{def:PLG}) over the irregular cells.  
The column ``operator discretization'' refers to the discretization procedure in Section~\ref{sec:discretizationOfSpatialOp}, 
and finally the column ``multigrid solution'' refers solely to the time consumption 
of FMG cycles.
It can be readiliy observed from the Tables that,
the ``cut-cell generation'' and the ``multigrid solution'' grow quadratically with respect to the mesh size ($\sim h^{-2}$), while the other components only grow linearly ($\sim h^{-1}$). 
This confirms our anlysis in Section~\ref{sec:complexity}, 
and we conclude that the proposed multigrid algorithm is efficient
in terms of solving elliptic equations in complex geometries.

\begin{table}[htbp]
	\centering
	\begin{tabular}{c|ccccc}
		\hline
        $h$ & \makecell{cut-cell\\generation}
        & \makecell{factorization\\of $L_{22}$} 
        & \makecell{poised lattice\\generation} 
        & \makecell{operator\\discretization}
        & \makecell{multigrid\\solution} \\
	    \hline 
	    1/128 & 1.85e$-$03 & 6.59e$-$04 & 2.53e$-$03 & 1.45e$-$02 & 4.03e$-$02 \\
	    1/256 & 6.31e$-$03 & 1.36e$-$03 & 5.17e$-$03 & 2.49e$-$02 & 8.20e$-$02 \\
	    1/512 & 4.43e$-$02 & 2.67e$-$03 & 1.14e$-$02 & 4.58e$-$02 & 2.57e$-$01 \\
	    1/1024 & 1.87e$-$01 & 5.30e$-$03 & 2.70e$-$02 & 9.01e$-$02 & 1.03e$-$00 \\
		\hline 
	\end{tabular}
	\caption{
    Time consumption (in seconds) of solving Problem 2
    on an AMD Threadripper PRO 3975WX at 4.0Ghz with DDR4 2133MHz memory. 
    The sparse factorization of the $L_{22}$ sub-block is computed by
    \texttt{Eigen::SparseLU}.}
	\label{tab:timing_1}
\end{table}

\begin{table}[htbp]
    \centering
    \begin{tabular}{c|ccccc}
		\hline
        $h$ & \makecell{cut-cell\\generation}
        & \makecell{factorization\\of $L_{22}$} 
        & \makecell{poised lattice\\generation} 
        & \makecell{operator\\discretization}
        & \makecell{multigrid\\solution} \\
	    \hline 
	    1/128 & 6.27e$-$03 & 2.01e$-$03 & 9.18e$-$03 & 2.16e$-$01 & 9.23e$-$02 \\
	    1/256 & 2.74e$-$02 & 3.59e$-$03 & 1.90e$-$02 & 4.36e$-$01 & 2.77e$-$01 \\
	    1/512 & 1.06e$-$01 & 8.01e$-$03 & 4.09e$-$02 & 8.75e$-$01 & 9.60e$-$01 \\
	    1/1024 & 4.05e$-$01 & 1.91e$-$02 & 9.21e$-$02 & 1.75e$+$00 & 3.71e$+$00 \\
		\hline 
    \end{tabular}
    \caption{
    Time consumption (in seconds) of solving Problem 3. 
    }
    \label{tab:timing_2}
\end{table}%

\section{Conclusions}
\label{sec:conclusions}

We have proposed a fast fourth-order cut cell method
for solving constant-coefficient elliptic equations in two-dimensional irregular domains. 
For spatial discretization, 
we employ the PLG algorithm to generate finite-volume interpolation stencils
near the irregular boundary.
We then derive the high-order approximation to the elliptic operators
from weighted least squares fitting.
We design a multigrid algorithm for solving the resulting linear system with optimal complexity.
We demonstrate the accuracy and efficiency of our method
by various numerical tests. 

Prospects for future research are as follows.
We expect a straightforward extension of this work
should yield a sixth- and higher-order solver for variable-coefficient elliptic equations.
We also plan to develop a fourth-order INSE solver in irregular domains
based on the GePUP formulation~\cite{Zhang2016:GePUP},
where the pressure-Poisson equation (PPE) and the Helmholtz equations
will be solved by the proposed elliptic solver.

\section*{Acknowledgements}
{
This work was supported by grants with approval number XXXXXXXX from XXX. 
}


\bibliographystyle{siamplain}
\bibliography{bib/ref}

\end{document}